%% file: template.tex
\documentclass[a4paper]{amsart}
\usepackage{amssymb
	,amsthm,dirtytalk
	,amsmath,color,todonotes
	,amscd,mathrsfs
	,mathtools,enumerate
}
\usepackage{import}
\usepackage{hyperref}
\usepackage[all]{xy}
\usepackage[top=30truemm,bottom=30truemm,left=25truemm,right=25truemm]{geometry}

\newtheorem{thm}{Theorem}[section]

\newtheorem{lem}[thm]{Lemma}
\newtheorem{prop}[thm]{Proposition}
\newtheorem{coro}[thm]{Corollary}

\theoremstyle{remark}

\theoremstyle{definition}

\newtheorem{rem}[thm]{Remark}

\numberwithin{equation}{section}

\makeatletter
\def\iddots{\mathinner{\mkern1mu\raise\p@
		\hbox{.}\mkern2mu\raise4\p@\hbox{.}\mkern2mu
		\raise7\p@\vbox{\kern7\p@\hbox{.}}\mkern1mu}}
\def\adots{\mathinner{\mkern2mu\raise\p@\hbox{.}
		\mkern2mu\raise4\p@\hbox{.}\mkern1mu
		\raise7\p@\vbox{\kern7\p@\hbox{.}}\mkern1mu}}
\makeatother

\allowdisplaybreaks
\title{The WZ Pair and Finite Congruences}

\author{Xiang Chen}
\address{School of Mathematical Sciences, Beihang University, 9 Nansan Street, Shahe Higher Education Park, Changping, Beijing, 102206, P.R.C.}
\email{rinchern@buaa.edu.cn}
\author{Junyan Liu}
\email{mayljy@buaa.edu.cn}
\author{Hengfei Lu}
\email{luhengfei@buaa.edu.cn}
\date{\today}

\begin{document}
	%\maketitle
	
	\subimport{sections/}{abstract.tex}
	\keywords {Wilf--Zeilberger pairs, hypergeometric series, inverse-binomial series,
$p$-adic congruences} 
	\subjclass[2020]{33F10,33C20,11A07}
	\maketitle
	\tableofcontents
	%\linenumbers

	\subimport{sections/}{introduction.tex}
	\subimport{sections/}{section1.tex}
	\subimport{sections/}{section2.tex}
	\subimport{sections/}{section3.tex}
\bibliographystyle{amsalpha}
\bibliography{bibs}

\end{document}

%% file: sections/abstract.tex
\begin{abstract}
We construct a meromorphic Wilf--Zeilberger pair for the hypergeometric series for \(3\ln 2\), completing the WZ proof left open by Guillera and Rogers. The same pair yields a one-parameter identity and, after a half-integer shift and differentiation, an inverse-binomial series for \(\zeta(3)\). We also study the finite truncations of the logarithmic series. For every prime \(p\ge 5\), we prove that the lower truncation is congruent to \(q_p(2)-1\) modulo \(p\), while the upper truncation is congruent to
\(-\frac{7}{16}p^2B_{p-3}\) modulo \(p^3\). The proofs combine the WZ method with terminating hypergeometric identities, the Pfaff--Saalsch\"utz identity, and classical congruences for harmonic sums.
\end{abstract}

%% file: sections/introduction.tex
\section{Introduction}

The Wilf--Zeilberger method provides an algorithmic framework for proving identities involving hypergeometric terms. Originating in the work of Wilf and Zeilberger \cite{WZ90} and developed systematically in \cite{PWZ96}, the method replaces the direct evaluation of a sum by the construction of a pair of hypergeometric functions satisfying a telescoping relation. Besides its classical applications to combinatorial and hypergeometric identities, the WZ method has proved particularly effective in the study of Ramanujan-type series and related special-value identities.

A closely related source of hypergeometric identities is the theory of logarithmic Mahler measure. Mahler measures of two-variable polynomials are known to exhibit striking connections with special values of \(L\)-functions, dilogarithms, and hypergeometric functions; see, for example, \cite{BoydRodriguezVillegas02,Rogers11}. In this direction, Guillera and Rogers \cite{GR15} used the WZ method to establish identities between Mahler measures. Among the hypergeometric evaluations arising in their work are
\[
2\ln 2
=
1+\sum_{n=1}^{\infty}
\frac{4n+1}{2n(2n+1)}
\binom{2n}{n}^{2}\frac{1}{2^{4n}},
\]
\[
3\ln 2
=
2+\sum_{n=1}^{\infty}
\frac{6n+1}{2n(2n+1)}
\binom{2n}{n}^{2}\frac{1}{2^{6n}},
\]
and
\[
8\ln 2
=
\frac{11}{2}
+
\sum_{n=1}^{\infty}
\frac{15n+2}{2n(2n+1)}
\binom{2n}{n}^{2}\frac{1}{2^{8n}}.
\]
The latter two identities are equivalent to the Mahler-measure relations
\[
4m(2)=m(8),
\qquad
11m(1)=m(16),
\]
in the notation of \cite{GR15}. Guillera and Rogers gave WZ proofs of the first and third logarithmic identities, but the corresponding WZ proof for the \(3\ln 2\) identity was left unresolved. One of the main purposes of the present paper is to complete this missing case.

\subsection{} Our first result is the construction of a meromorphic WZ pair underlying
the \(3\ln 2\) identity. Define
\[
U(n,k)
=
4^{-n}
\frac{(1/2)_n(1/2+2k)_n}{(1)_n(1+k)_n}
\frac{(1/4)_k(3/4)_k}{(1)_k^2},
\]
and
\[
F(n,k)
=
-\frac{n}{2(n+k)}U(n,k),
\qquad
G(n,k)
=
\frac{k(6n+4k+1)}
     {2(n+k)(2n+1)}U(n,k).
\]

\begin{thm}[Proposition \ref{prop:wz}, Theorem \ref{thm: parameter} and Corollary \ref{coro:3log2} ]
\label{thm:intro-wz}
The functions \(F(n,k)\) and \(G(n,k)\) form a meromorphic WZ pair:
\[
F(n+1,k)-F(n,k)
=
G(n,k+1)-G(n,k).
\]
Moreover, for every \(x>0\),
\[
\frac{4^x\sqrt{\pi}\,\Gamma(x)\Gamma(x+1)}
     {\Gamma(2x+1/2)}
=
\sum_{n=0}^{\infty}
\frac{6n+4x+1}{(2n+1)(n+x)}
\frac{(1/2)_n(1/2+2x)_n}
     {(1)_n(1+x)_n}
4^{-n}.
\]
Taking the limit \(x\to0^+\) gives
\[
3\ln 2
=
2+\sum_{n=1}^{\infty}
\frac{6n+1}{2n(2n+1)}
\binom{2n}{n}^{2}\frac{1}{2^{6n}}.
\]
\end{thm}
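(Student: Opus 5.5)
The proof has three parts: the WZ identity itself, a summation argument that gives the one-parameter identity for real \(x>0\), and the limit \(x\to0^+\).

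\emph{The WZ identity.} Divide both sides of \(F(n+1,k)-F(n,k)=G(n,k+1)-G(n,k)\) by \(U(n,k)\). The ratios \(U(n+1,k)/U(n,k)\) and \(U(n,k+1)/U(n,k)\) are rational in \(n,k\). From the Pochhammer symbols,
\[
\frac{U(n+1,k)}{U(n,k)}=\frac{(1/2+n)(1/2+2k+n)}{4(n+1)(n+1+k)}.
\]
For \(U(n,k+1)/U(n,k)\) we use \((1/2+2k+2)_n/(1/2+2k)_n=(1/2+2k+n)(3/2+2k+n)/((1/2+2k)(3/2+2k))\) and \((1+k)_n/(2+k)_n=(1+k)/(1+k+n)\). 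We also need \((1/4+k)(3/4+k)/(1+k)^2\). The product of the last two factors simplifies: \((1/2+2k)(3/2+2k)=4(1/4+k)(3/4+k)\). The identity then reduces to a polynomial identity in \(n,k\) after clearing denominators, which we verify directly (or by computer algebra). Here the pair is \emph{meromorphic}: we treat \(k\) as a complex parameter, so the identity holds as rational functions times \(U\).

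\emph{The one-parameter identity.} Replace \(k\) by a real \(x>0\) and sum the WZ relation over \(n\ge0\). Since \(F(0,x)=0\), telescoping in \(n\) gives
\[
\sum_{n\ge0}\bigl(G(n,x+1)-G(n,x)\bigr)=\lim_{N\to\infty}F(N,x).
\]
For fixed \(x\), \(U(N,x)\) decays like \(4^{-N}\) times a power of \(N\), so this limit is \(0\). Hence \(S(x):=\sum_n G(n,x)\) satisfies \(S(x+1)=S(x)\). Set \(f(x)=\sum_n G(n,x)/\bigl(\tfrac{(1/4)_x(3/4)_x}{\Gamma\text{-normalized}}\bigr)\); more precisely, write
\[
\sum_n G(n,x)=\frac{x}{2}\cdot\frac{\Gamma(x+1/4)\Gamma(x+3/4)}{\Gamma(1/4)\Gamma(3/4)\Gamma(x+1)^2}\,T(x),
\]
where \(T(x)\) is the series on the right side of the theorem. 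Periodicity of \(S\) then turns into a functional equation for \(T\). Next, letting \(x\to\infty\) along integers, the summands behave like those of \(\sum_n (1/2)_n 2^n/(n!\,4^n)\cdot(\ldots)\). With Stirling's formula, this determines \(S(x)\) as the constant limit \(\lim_{m\to\infty}S(x+m)\), computed by dominated convergence: the ratio \((1/2+2x)_n/(1+x)_n\to 2^n\), and \((6n+4x+1)/(n+x)\to4\).

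Periodic plus having a limit along \(x+m\) means \(S\) is constant. Evaluating this constant, and using the duplication formula to rewrite \(\Gamma(x+1/4)\Gamma(x+3/4)=2^{1/2-2x}\sqrt\pi\,\Gamma(2x+1/2)\), gives the closed form \(4^x\sqrt\pi\,\Gamma(x)\Gamma(x+1)/\Gamma(2x+1/2)\).

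\emph{The main obstacle} is the rigorous asymptotic step \(x\to\infty\): it needs uniform bounds on the summands in order to interchange the limit with \(\sum_n\). The terms are dominated by \(C\,n^{-1/2}2^{-n}\)-type bounds uniformly for large \(x\). As a fallback, I would instead compare both sides at \(x=1/2\) or another explicitly computable point via Gauss's \({}_2F_1\) summation, and combine this with analyticity in \(x\) and Carlson's theorem.

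\emph{The limit \(x\to0^+\).} Expand both sides at \(x=0\). The left side is \(\Gamma(x)\sqrt\pi\,4^x\Gamma(x+1)/\Gamma(2x+1/2)=1/x+(2\ln2-\gamma+\psi\text{-terms})+O(x)\); using \(\psi(1/2)=-\gamma-2\ln2\) and \(\psi(1)=-\gamma\), the constant term is \(6\ln2\). On the right, the \(n=0\) term is \((4x+1)/x=1/x+4\). For \(n\ge1\), the terms converge to \(\frac{6n+1}{n(2n+1)}\binom{2n}{n}^2 2^{-6n}\), since \((1/2)_n^2/(n!^2 4^n)=\binom{2n}{n}^2 2^{-6n}\). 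The poles \(1/x\) cancel, giving \(6\ln2=4+\sum_{n\ge1}\ldots\). Dividing by \(2\) yields the stated identity, with uniform convergence justifying the termwise limit.
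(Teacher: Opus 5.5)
Your proposal is correct and follows the paper's proof essentially step for step: the same shift quotients reduce the WZ relation to a polynomial identity, telescoping in $n$ with $F(0,k)=0$ gives $1$-periodicity of $S(x)=\sum_n G(n,x)$, dominated convergence along $x+m$ shows $S$ is constant, and the Laurent expansion at $x=0$ with $\psi(1)=-\gamma$, $\psi(1/2)=-\gamma-2\ln 2$ yields the $3\ln 2$ formula. The one step you leave implicit is the value of that constant, which the paper gets from $\sum_{n\ge0}\binom{2n}{n}8^{-n}/(2n+1)=\sqrt2\arcsin(1/\sqrt2)=\pi\sqrt2/4$, giving $S\equiv\tfrac12$; matching the $1/x$ terms at $x=0$ would also fix it, so your Gauss/Carlson fallback is unnecessary.
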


The same WZ pair contains further information. After a half-integer
shift in the first variable and differentiation with respect to the
second variable, it produces an inverse-binomial series for
\(\zeta(3)\).

\begin{thm}[Theorem \ref{thm:zeta3}]
\label{thm:intro-zeta3}
The following formula is true:
\[
\zeta(3)
=
\frac{4}{7}
\sum_{n=0}^{\infty}
\frac{(3n+2)4^n}
     {(2n+1)^3(n+1)\binom{2n}{n}^{2}}.
\]
\end{thm}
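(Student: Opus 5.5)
The plan is to derive the formula from the one-parameter identity of Theorem~\ref{thm:intro-wz}, analytically continued in $x$, after the half-integer shift in the summation index mentioned before the statement. Write the identity as
\[
\Phi(x):=\frac{4^x\sqrt{\pi}\,\Gamma(x)\Gamma(x+1)}{\Gamma(2x+1/2)}=\sum_{n\ge0}\frac{6n+4x+1}{(2n+1)(n+x)}\frac{(1/2)_n(1/2+2x)_n}{(1)_n(1+x)_n}4^{-n}.
\]

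\emph{Step 1: the shifted identity.} Rather than shift the index of the series directly, I would redo the WZ telescoping for the pair $F,G$ of Proposition~\ref{prop:wz}, now with $n$ running over $n_0+\mathbb{Z}_{\ge0}$ for $n_0=1/2$ (or $-1/2$). The relation $F(n+1,k)-F(n,k)=G(n,k+1)-G(n,k)$ holds identically as rational-times-Gamma functions, so summing it over $n=1/2,3/2,\dots$ gives
\[
\sum_{j\ge0}G(j+\tfrac12,k)=\sum_{j\ge0}G(j+\tfrac12,k+1)+F(\tfrac12,k)-\lim_{N\to\infty}F(N+\tfrac12,k).
\]
Iterating in $k$ and letting $k\to\infty$ expresses $\sum_j G(j+\frac12,x)$ as a sum of boundary terms $F(\frac12,x+m)$ plus a limit. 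Evaluating that limit by Stirling's formula gives a Gamma-quotient identity of the same shape as $\Phi(x)$. After substituting $n=j+\tfrac12$, one has $(1/2)_{j+1/2}/(1)_{j+1/2}=\Gamma(j+1)/(\Gamma(1/2)\Gamma(j+3/2))$, which is $\propto 4^j/\big((2j+1)\binom{2j}{j}\big)$. This is the source of the inverse central binomials.

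\emph{Step 2: choose the point and differentiate.} The target summand $(3n+2)4^n/\big((2n+1)^3(n+1)\binom{2n}{n}^2\big)$ needs a squared inverse binomial. I would therefore look for a value $x=x_0$ (presumably $x_0=0$ or $x_0=-1/4$, so that $(1/2+2x)_{j+1/2}/(1+x)_{j+1/2}$ also becomes $\propto 4^j/\binom{2j}{j}$ times a simple rational factor) at which the shifted identity becomes elementary. The numerator $6n+4x+1$ at $n=j+1/2$ gives $6j+4+4x$, which is $2(3j+2)$ at $x=0$. This suggests expanding around $x=0$ in the second variable. Each $x$-derivative of $\log(a+x)_n$ produces harmonic-type sums, and derivatives of $1/(n+x)$ produce extra powers of $1/(2j+1)$. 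The $\zeta(3)$ must come from a second-order term: the $x^2$ coefficient of the Gamma side involves $\psi'$ and $\psi''$ values such as $\psi''(1/2)=-14\zeta(3)$, which explains the factor $7$ in $4/7$.

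\emph{Step 3: kill the harmonic sums.} The main obstacle is that differentiating the Pochhammer ratios produces series weighted by harmonic numbers, not just by $1/(2j+1)^3$. I would handle this in one of two ways. The first is to differentiate with respect to $k$ in the WZ relation itself, before summing, and choose the shift so that the boundary term $F(\frac12,k)$ absorbs those contributions. The second is to combine the first-order identity (which should be a known evaluation involving $\pi^2$ or $\ln 2$) with the second-order one, so that the harmonic-weighted series cancel. I would first test the second way numerically at $x=0$, since $\sum_j 4^j/((2j+1)^2\binom{2j}{j}^2)$-type series with harmonic weights are precisely the obstruction. Finally, justify termwise differentiation by uniform convergence near $x_0$: the terms decay like $j^{-3/2}$ times logarithms, up to the $4^{-n}$ normalization.
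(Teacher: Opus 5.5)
Your Step 1 is the right starting point; it is how the paper begins. But you mispredict what it produces, and that sends Steps 2 and 3 in the wrong direction.

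The boundary terms are elementary. From the Gamma form of $U$ and the duplication formula, $U(\tfrac12,k)=\frac{2}{\pi^2(2k+1)}$, so $F(\tfrac12,k)=-\frac{1}{\pi^2(2k+1)^2}$. The iterated telescoping then gives
\[
\sum_{j\ge0}G(j+\tfrac12,x)=L-\frac{1}{4\pi^2}\,\psi'(x+\tfrac12),
\]
with $L$ independent of $x$. This is a trigamma identity, not a Gamma quotient of the shape of $\Phi(x)$.

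You also overlooked that $G(n,k)$ carries the explicit factor $k$. Hence $G(j+\tfrac12,0)=0$. This gives $L=\psi'(\tfrac12)/(4\pi^2)=\tfrac18$, or lets you eliminate $L$ by subtracting the case $x=0$, as the paper does. By the product rule, $\partial_x G(j+\tfrac12,x)\big|_{x=0}=U(j+\tfrac12,0)\,\frac{3j+2}{(2j+1)(j+1)}$, where $U(j+\tfrac12,0)=\frac{2\cdot 4^j}{\pi^2(2j+1)^2\binom{2j}{j}^2}$. The derivatives of the Pochhammer ratios are multiplied by $x=0$ and drop out, so no harmonic sums ever appear.

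Consequently the argument closes at first order. Differentiating at $x=0$ gives
$\frac{2}{\pi^2}\sum_{j\ge0}\frac{(3j+2)4^j}{(2j+1)^3(j+1)\binom{2j}{j}^2}=-\frac{1}{4\pi^2}\psi''(\tfrac12)=\frac{4}{\pi^2}\sum_{k\ge0}\frac{1}{(2k+1)^3}=\frac{7\zeta(3)}{2\pi^2}$.
The only analysis needed is dominated convergence to move $\partial_x$ through the two sums. For example, use $0\le G(j+\tfrac12,y)/y\le \frac{4}{\pi^2}\frac{2^{-j}}{j+1}$ for $0<y\le 1$, and the mean value theorem applied to $F(\tfrac12,k+y)$.

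Your claim that $\zeta(3)$ must come from a second-order term is therefore wrong for the shifted identity. Its second $x$-derivative yields $\psi'''(\tfrac12)=\pi^4$ on the boundary side, together with harmonic-weighted series on the other side, not a $\zeta(3)$ evaluation. If you instead meant the unshifted $\Phi(x)$, the $\zeta(3)$ in its $x^2$ coefficient is paired with series in $\binom{2n}{n}^2 64^{-n}$, not with inverse binomial squares.

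The harmonic-sum ``obstacle'' of Step 3 is an artifact of this misidentification. You leave its resolution to a numerical test, so the proposal as written does not prove the statement. The two facts that make the proof work are missing: $G(\cdot,0)\equiv 0$, and the explicit form of $F(\tfrac12,k)$.
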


\subsection{}Finite truncations of hypergeometric series often possess arithmetic properties that are stronger than those suggested by termwise divisibility. This phenomenon forms part of the extensive theory of hypergeometric congruences and supercongruences. Important early examples arise from congruences for Ap\'ery-type sequences \cite{Beukers85}. Van Hamme \cite{VanHamme97} formulated a systematic collection of \(p\)-adic analogues of Ramanujan-type identities, while Rodriguez-Villegas \cite{RodriguezVillegas03} proposed further hypergeometric supercongruences motivated by Calabi--Yau geometry. Subsequent work of Mortenson \cite{Mortenson03}, Zudilin \cite{Zudilin09}, Long \cite{Long11}, Swisher \cite{Swisher15}, and Long and Ramakrishna \cite{LongRamakrishna16} developed several techniques for proving such congruences, including Gaussian hypergeometric functions, WZ pairs, classical hypergeometric transformations, and \(p\)-adic Gamma-function expansions. More recently, \(q\)-analogues and the creative \(q\)-microscope have provided another systematic approach to this subject \cite{GuoZudilin19}. Congruences involving central binomial coefficients, harmonic sums, Fermat quotients, and Bernoulli numbers have also been studied extensively; see, for instance, \cite{SunTauraso10,Sun11}.

We first use the \(2\ln 2\) identity as a model. Put
\[
a_n
=
\frac{4n+1}{2n(2n+1)}
\binom{2n}{n}^{2}\frac{1}{2^{4n}}.
\]
For an odd prime \(p\), the denominator of \(a_n\) is divisible by \(p\)
precisely at \(n=(p-1)/2\) with $1\leq n\leq p-1$. It is therefore natural to consider
separately the lower and upper parts of the truncation. Followed by the congruences in \cite{Sun14}, we can deduce that

\begin{thm}[Theorem \ref{thm:2log2 congruences}]
\label{thm:intro-2log}
Let \(p\) be an odd prime. Then
\[
\sum_{n=1}^{(p-3)/2}
\frac{4n+1}{2n(2n+1)}
\binom{2n}{n}^{2}\frac{1}{2^{4n}}
\equiv 0 \pmod p,
\]
and
\[
\sum_{n=(p+1)/2}^{p-1}
\frac{4n+1}{2n(2n+1)}
\binom{2n}{n}^{2}\frac{1}{2^{4n}}
\equiv 0 \pmod{p^3}.
\]
\end{thm}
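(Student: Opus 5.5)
The plan is to first simplify the summand by partial fractions. Put \(c_n=\binom{2n}{n}^2 2^{-4n}\). Since
\[
\frac{4n+1}{2n(2n+1)}=\frac{1}{2n}+\frac{1}{2n+1},
\]
we have \(a_n=\frac{c_n}{2n}+\frac{c_n}{2n+1}\). Both parts of Theorem \ref{thm:intro-2log} then reduce to statements about the two sums \(\sum c_n/n\) and \(\sum c_n/(2n+1)\) over the relevant ranges. As a consistency check I would also record the telescoping identity
\[
\sum_{k=0}^{n}(4k+1)c_k=(2n+1)^2c_n,
\]
which is checked by induction on \(n\) and is the finite shadow of the WZ structure. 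It gives an alternative route by Abel summation if the direct route stalls.

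\emph{Lower truncation.} Let \(m=(p-1)/2\). For \(0\le n\le m\) we have \(\binom{2n}{n}/(-4)^n=\binom{-1/2}{n}\equiv\binom{m}{n}\pmod p\), hence \(c_n\equiv\binom{m}{n}^2\pmod p\). Also \(2n+1\equiv 2n-2m=-2(m-n)\pmod p\). For \(1\le n\le m-1\) none of the denominators is divisible by \(p\), so
\[
\sum_{n=1}^{m-1}a_n\equiv\sum_{n=1}^{m-1}\frac{\binom{m}{n}^2}{2n}-\sum_{n=1}^{m-1}\frac{\binom{m}{n}^2}{2(m-n)}\pmod p .
\]
The substitution \(n\mapsto m-n\), together with \(\binom{m}{n}=\binom{m}{m-n}\), shows that the two sums coincide. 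This gives the first congruence; the argument is essentially complete as stated.

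\emph{Upper truncation.} For \(m+1\le n\le p-1\) the numerator \(\binom{2n}{n}\) is divisible by \(p\), so \(c_n\equiv0\pmod{p^2}\). Moreover \(2n+1\) ranges over \(p+2,\dots,2p-1\) and is never divisible by \(p\), while \(n\) is prime to \(p\). Thus every term is already \(0\bmod p^2\), and the real content is the extra factor of \(p\). I would write
\[
\sum_{n=m+1}^{p-1}a_n=\frac12\sum_{n=m+1}^{p-1}\frac{c_n}{n}+\sum_{n=m+1}^{p-1}\frac{c_n}{2n+1}
\]
and evaluate each piece modulo \(p^3\) using the congruences of \cite{Sun14}. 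Sun gives the sums \(\sum_{k=0}^{p-1}c_k/k\)-type and \(\sum c_k/(2k+1)\)-type over the full range, and over \(0\le k\le m\), modulo \(p^2\) or \(p^3\), in terms of \(q_p(2)\), harmonic numbers and \(B_{p-3}\). Subtracting the lower-half evaluations from the full-range ones (taking care that the term \(n=m\) has \(p\) in its denominator but appears in both) should give each upper piece as an explicit combination of \(p^2q_p(2)^2\)-type and \(p^2B_{p-3}\)-type terms. The claim is then that these cancel between the two pieces.

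The main obstacle is this last step: matching precisely which of Sun's congruences hold to modulus \(p^3\) on the upper range, and verifying the cancellation. If the available statements are only modulo \(p^2\) on one of the pieces, I would instead use the telescoping identity above. Writing \(c_n(4n+1)=T_n-T_{n-1}\) with \(T_n=(2n+1)^2c_n\) and summing by parts converts the upper sum into boundary terms \(T_n\cdot\frac{1}{2n(2n+1)}\), which vanish modulo \(p^3\) at \(n=p-1\) since \(T_{p-1}\equiv0\pmod{p^2}\) and an extra \(p\) is available there. It also produces a residual sum \(\sum T_n\bigl(\tfrac{1}{2n(2n+1)}-\tfrac{1}{2(n+1)(2n+3)}\bigr)\) whose terms are \(O(p^2)\) times rational functions of \(n\). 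That residual sum can be reduced modulo \(p\) via \(\binom{2n}{n}/p\) congruences and the symmetry \(n\mapsto p-1-n\).
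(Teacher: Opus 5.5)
\textbf{Lower truncation.} Your argument is correct, and it takes a different and more elementary route than the paper. The paper reduces Sun's mod $p^3$ evaluations of $\sum c_n/n$ and $\sum c_n/(2n+1)$ modulo $p$ and strips off the $n=0$ and $n=(p-1)/2$ terms. It then needs Lehmer's congruence $H_{(p-1)/2}\equiv-2q_p(2)\pmod p$ to see that the remainder $-H_{(p-1)/2}-2q_p(2)$ vanishes. Your reduction $c_n\equiv\binom{m}{n}^2$, $2n+1\equiv-2(m-n)$, followed by the involution $n\mapsto m-n$, kills the sum outright. It is self-contained and also covers $p=3,5$ without a separate check.

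\textbf{Upper truncation: this part is not proved.} Your main route is the paper's route, but you leave the decisive step as ``should give'' and ``the claim is then that these cancel.'' That cancellation is the entire content of the second congruence. What closes it, and what the paper uses, is that \cite{Sun14} already evaluates both pieces directly on the upper range. For $p>5$,
\[
\sum_{n=(p+1)/2}^{p-1}\frac{\binom{2n}{n}^2}{n16^n}\equiv\frac72p^2B_{p-3},
\qquad
\sum_{n=(p+1)/2}^{p-1}\frac{\binom{2n}{n}^2}{(2n+1)16^n}\equiv-\frac74p^2B_{p-3}\pmod{p^3}.
\]
Hence the upper sum is $\frac12\cdot\frac72p^2B_{p-3}-\frac74p^2B_{p-3}=0$ modulo $p^3$, and $p=3,5$ are checked by hand. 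No subtraction from full-range sums is needed, and no $q_p(2)^2$ terms occur.

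\textbf{The fallback does not fill the gap.}
\begin{itemize}
\item The boundary term at $n=p-1$ does not vanish modulo $p^3$. Here $\binom{2p-2}{p-1}$ is divisible by $p$ exactly once and $w_{p-1}=1/(2(p-1)(2p-1))$ is a unit, so $T_{p-1}w_{p-1}\equiv p^2/2\pmod{p^3}$.
\item You also dropped the lower boundary term $-T_mw_{m+1}=-p^2c_m/((p+1)(p+2))\equiv-p^2/2\pmod{p^3}$.
\item The two boundary terms do cancel. But then the residual sum $\sum_{n=m+1}^{p-2}\frac{(2n+1)(4n+3)c_n}{2n(n+1)(2n+3)}$ is congruent modulo $p^3$ to the original upper sum, so summation by parts only restates the problem.
\item The symmetry $n\mapsto p-1-n$ maps $m+1\le n\le p-2$ into the lower range rather than pairing terms within it, so it gives no cancellation.
\end{itemize}
To avoid citing Sun, you would need a genuine mod-$p$ evaluation of the reflected sum. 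After $n=p-k$, the upper truncation is $\equiv-\frac{p^2}{8}\sum_{k=1}^{m}\frac{(4k-1)16^k}{k^3(2k-1)\binom{2k}{k}^2}\pmod{p^3}$. One way to evaluate such a sum is a finite WZ identity of the kind the paper builds for the $3\ln 2$ upper truncation.
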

\noindent We also examine the possibility of strengthening the upper-truncation
congruence to higher powers of \(p\). Numerical computations for primes
\(p< 300\,000\,000\) show that vanishing modulo \(p^4\) occurs only for
\(p=3\) and \(p=1237\), while neither of these two cases vanishes modulo
\(p^5\). This leads us to conjecture that the upper truncation is never
zero modulo \(p^5\) for any odd prime \(p\) (see \eqref{conj}).

For the \(3\ln 2\) identity, we likewise consider the upper and lower truncations just as we did for $2\ln 2$.

\begin{thm}[Theorem \ref{thm:3log2 congruence}]
\label{thm:intro-3log}
For every prime \(p\geq 5\),
\[
\sum_{n=1}^{(p-3)/2}
\frac{6n+1}{2n(2n+1)}
\binom{2n}{n}^{2}\frac{1}{2^{6n}}
\equiv q_p(2)-1 \pmod p,
\]
and
\begin{align*}
\sum_{n=(p+1)/2}^{p-1}
\frac{6n+1}{2n(2n+1)}
\binom{2n}{n}^{2}\frac{1}{2^{6n}}
\equiv
-\frac{7}{16}p^2B_{p-3}
\pmod{p^3},
\end{align*}
where $q_p(2)=(2^{p-1}-1)/p$ is the Fermat quotient of \(2\) and $B_{p-3}$ is the Bernoulli number.
\end{thm}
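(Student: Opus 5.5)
The plan is to split the summand by partial fractions,
\[
\frac{6n+1}{2n(2n+1)}=\frac{1}{2n}+\frac{2}{2n+1},
\]
and to write \(\binom{2n}{n}^2 2^{-6n}=4^{-n}(1/2)_n^2/(1)_n^2=U(n,0)\). The point is to recognise both truncations as specialisations of the WZ pair of Proposition \ref{prop:wz}. Since \(G(n,0)=0\) and
\[
\lim_{k\to 0}\frac{G(n,k)}{k}=\frac{6n+1}{2n(2n+1)}U(n,0)
\]
for \(n\ge 1\), summing the WZ relation over \(0\le n\le N-1\) gives
\[
F(N,k)-F(0,k)=\sum_{n<N}\bigl(G(n,k+1)-G(n,k)\bigr),
\]
and this is exactly the finite analogue of the one-parameter identity of Theorem \ref{thm: parameter}. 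I would differentiate this relation at \(k=0\), or equivalently work with the truncated one-parameter sum at \(x\to0\), taking \(N=(p-1)/2\) and \(N=p\). This expresses each truncation through a boundary term \(F(N,\cdot)\) and a sum \(\sum_n G(n,1)\). The latter has the factor \((1/2+2)_n/(2)_n\), which is a lower-complexity sum. I would then convert it to a terminating \({}_3F_2\) and evaluate it in closed form by the Pfaff--Saalsch\"utz identity.

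For the lower truncation modulo \(p\), put \(m=(p-1)/2\) and use
\[
\binom{2n}{n}\equiv \binom{m}{n}(-4)^n \pmod p,
\]
so that the summand becomes \(\binom{m}{n}^2 4^{-n}\bigl(\frac1{2n}+\frac{2}{2n+1}\bigr)\). Sums of the shape \(\sum_n \binom{m}{n}^2 x^n/(n+a)\) are again terminating hypergeometric. Via Pfaff--Saalsch\"utz, or directly via the closed form obtained from the telescoped WZ relation, they reduce to harmonic numbers \(H_m\) and to products like \(\binom{2m}{m}^{\pm1}4^{\pm m}\). I would then finish with the classical facts
\[
H_{(p-1)/2}\equiv -2q_p(2),\qquad \binom{p-1}{(p-1)/2}\equiv(-1)^{(p-1)/2}4^{p-1}\pmod p,
\]
the latter giving \(4^{p-1}\equiv 1+2pq_p(2)\). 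The constant \(-1\) in the claimed value should come from removing the excluded endpoint terms \(n=0\) and \(n=m\) of the full finite identity.

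For the upper truncation, each term with \(m<n\le p-1\) has \(p\mid\binom{2n}{n}\), so \(\binom{2n}{n}^2\) contributes \(p^2\), while the factor \(\frac{6n+1}{2n(2n+1)}\) is a \(p\)-adic unit there. The congruence modulo \(p^3\) is therefore a statement about the sum modulo \(p\) of the quantities \(\binom{2n}{n}^2/p^2\). Writing \(n=p-1-j\) with \(0\le j<m\), I would use
\[
\frac{1}{p}\binom{2n}{n}\equiv \frac{(-1)^{?}}{(2j+1)\binom{2j}{j}}\cdot(\text{explicit unit}) \pmod p.
\]
This turns the upper sum into an inverse-binomial sum \(\sum_j \frac{\ast}{(2j+1)^2\binom{2j}{j}^2}\,4^{?j}\). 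That is the finite shadow of the \(\zeta(3)\) series of Theorem \ref{thm:zeta3}, which also comes from the half-integer shift of the same WZ pair.

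I would then repeat that WZ manipulation at the finite level, namely the half-shifted pair telescoped and differentiated in \(k\). This rewrites the inverse-binomial sum modulo \(p\) as a combination of harmonic sums \(\sum_{j<m}1/(2j+1)^3\) and \(H_{m}^{(3)}\)-type sums. These are classically \(\equiv c\,B_{p-3}\pmod p\); for instance \(\sum_{j=1}^{m}1/j^3\equiv -2B_{p-3}\pmod p\) for \(p\ge5\). The factor \(7/16\) should appear just as the \(7/4\) relating \(\zeta(3)\) to odd reciprocal cubes does.

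The main obstacle is this last step. One must control the boundary terms \(F(N,k)\) and their \(k\)-derivatives \(p\)-adically to sufficient precision, since they carry \(1/p\) factors that cancel against the \(p^2\) only after careful expansion. One must also check that the lower-order harmonic contributions (\(H_m\), \(H_m^{(2)}\)) cancel modulo \(p^3\), leaving only the \(B_{p-3}\) term. If the direct WZ route is too messy there, I would fall back on expanding \(\binom{2n}{n}\) for \(n>m\) as \(p\binom{2j}{j}^{-1}\cdot\)(harmonic correction). I would then evaluate the resulting sums with the congruences of \cite{Sun14} and \cite{SunTauraso10}.
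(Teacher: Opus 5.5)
Both halves of your plan stop short of the identity that actually closes the sum.

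\textbf{Lower truncation.} Differentiating the telescoped relation at $k=0$ (equivalently, letting $x\to0$ in the truncated one-parameter identity) evaluates nothing. It trades $\sum_{n=1}^{N-1}b_n$ for a boundary term and $\sum_{n<N}(\partial_kG)(n,1)$, not $\sum_{n<N}G(n,1)$. That sum is a truncated, non-terminating sum of $4^{-n}(1/2)_n(5/2)_n/\bigl((1)_n(2)_n\bigr)$, weighted by harmonic-type factors from the $k$-derivatives of the Pochhammer symbols. It is no simpler than what you started with.

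Pfaff--Saalsch\"utz needs a balanced terminating ${}_3F_2$ at argument $1$. It applies neither to that sum nor to your separate pieces $\sum_n\binom{m}{n}^2 4^{-n}/(2n)$ and $\sum_n\binom{m}{n}^2 4^{-n}\cdot 2/(2n+1)$, which sit at argument $1/4$. You should not expect closed forms for these pieces separately: their archimedean counterparts are $3\ln 2-m(8)$ and $m(8)-2$.

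The missing idea is to specialize the parameter at a $p$-adically small value rather than let it tend to $0$. Put $x_N=-(2N+1)/4$, which equals $-p/4$ when $N=(p-1)/2$. Then:
\begin{itemize}
\item $(1/2+2x_N)_n=(-N)_n$, so the sum terminates at $n=N$ and the boundary term $F(N+1,x_N)$ vanishes;
\item since $x_N+1=x_{N-2}$, induction on $N$ in steps of two gives the exact identity $\sum_{n=0}^{N}H(n,x_N)=0$ of Lemma~\ref{lem:terminating-identity} (the base cases $N=0,1$ have $(4x_N+1)(4x_N+3)=0$).
\end{itemize}
For $N=(p-1)/2$ one has $-N\equiv\tfrac12$ and $(3-2N)/4\equiv1\pmod p$, so the interior terms are $\equiv 2b_n$. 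As you guessed, the endpoints supply the constant: $T_0=4-\tfrac4p$ and $T_N\equiv\tfrac4p-2-2q_p(2)\pmod p$. The latter needs Morley's congruence modulo $p^2$, not just modulo $p$, together with $\prod_{j=1}^{N}\bigl(1-\tfrac{p}{4j}\bigr)^{-1}\equiv1+\tfrac p4H_{(p-1)/2}\pmod{p^2}$.

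\textbf{Upper truncation.} Your skeleton is the paper's, up to a second reflection $j\mapsto\tfrac{p-3}{2}-j$ via $4^{-r}\binom{2r}{r}\equiv(-1)^r\binom{(p-1)/2}{r}\pmod p$. This turns your $(6j+5)64^j$-sum into the $(3m+2)4^m$-sum of Theorem~\ref{thm:zeta3}. But the finite half-shifted, $k$-differentiated WZ relation does not hand you harmonic sums directly.

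Its boundary contribution $\sum_{k\ge0}\partial_kF(N+\tfrac12,k)$ comes from the partial fractions in $k$ of the rational function $\pi^2F(N+\tfrac12,k)$. It leaves the double sum $\sum_{\ell=1}^{\lfloor N/2\rfloor}A_\ell\sum_{j=N-\ell}^{N-1}(2j+1)^{-2}$ with $A_\ell=\frac{(-1)^{\ell+1}\binom{N}{\ell}\binom{2N-2\ell}{N}}{2\ell\binom{2N}{N}}$. At $N=(p-1)/2$ this is congruent modulo $p$ to $-\tfrac18\sum_{\ell=1}^{M}\tfrac{(-1)^\ell}{\ell}\binom{M}{\ell}\binom{M+\ell}{\ell}H^{(2)}_\ell$, where $M=\lfloor p/4\rfloor$ and $H^{(r)}_\ell=\sum_{j=1}^{\ell}j^{-r}$.

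This is where Pfaff--Saalsch\"utz is genuinely needed. Differentiated in one parameter and expanded to second order in another (Lemma~\ref{lem:pfaff}), it collapses the double sum to $\tfrac14H^{(3)}_M\equiv-\tfrac94B_{p-3}$, using $B_{p-3}(\tfrac14)\equiv28B_{p-3}\pmod p$. This quarter-range term dominates: the odd-cube part alone would give $\tfrac18p^2B_{p-3}$ instead of $-\tfrac{7}{16}p^2B_{p-3}$.

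Conversely, the obstacle you anticipate does not arise. After the reflection there are no $1/p$ factors in boundary terms and no cancellation of $H_m$, $H^{(2)}_m$ modulo $p^3$ to manage; everything is a congruence modulo $p$ between $p$-integral quantities. The fallback to \cite{Sun14} does not help either, since those congruences concern $16^{-n}$-weighted sums, not $64^{-n}$-weighted ones.
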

The lower-truncation congruence is obtained from a terminating specialization of the WZ pair together with the classical congruences of Morley and Lehmer \cite{Mor94,Leh38}. For the upper truncation, the WZ pair first produces a finite inverse-binomial-square identity. A differentiated Pfaff--Saalsch\"utz identity then reduces the remaining expression to harmonic sums, which can be evaluated modulo \(p\) in terms of Bernoulli polynomials and \(B_{p-3}\).

The paper is organized as follows. In Section~2 we construct the WZ pair, derive the \(3\ln 2\) identity, and obtain the corresponding formula for \(\zeta(3)\). Section~3 studies the finite congruences associated with the \(2\ln 2\) model. In Section~4 we establish the lower- and upper-truncation congruences for the \(3\ln 2\) series.

\subsection*{Acknowledgment} The third named author was partially supported by NSFC 12301031.

%% file: sections/section1.tex
\section{The Wilf--Zeilberger pair method}

The Wilf--Zeilberger method converts certain identities between infinite
series into identities between rational functions. We begin with the
formulation used by Guillera and Rogers~\cite{GR15}. We say $F(n, k)$ is hypergeometric if $F (n + 1, k)/F (n, k)$ and $F (n, k + 1)/F (n, k)$ are rational functions of $n$ and $k$. Two such functions form a WZ pair if
\begin{equation}
 F(n+1,k)-F(n,k)=G(n,k+1)-G(n,k).
 \label{eq:wz}
\end{equation}
We use meromorphic functions, with the $Pochhammer\ symbol$ interpreted as
$(a)_z=\Gamma(a+z)/\Gamma(a)$. The general WZ method is developed
in~\cite{GR15}.

Suppose that $F(0,k)=0$ and $F(n,k)\to0$ as $n\to\infty$.
Summation of \eqref{eq:wz}, we have
\[
 \sum_{n=0}^{\infty}G(n,k+1)=\sum_{n=0}^{\infty}G(n,k).
\]
If the series also converges uniformly, through countable times of induction, we have
\begin{equation}
 \sum_{n=0}^{\infty}G(n,k)
 =\sum_{n=0}^{\infty}\lim_{j\to\infty}G(n,k+j).
 \label{eq:periodic-method}
\end{equation}
If $\lim_{j\to\infty} G(n, k + j)$ is independent of $k$, then we can conclude that for unrestricted $k$:
\begin{equation}
    \sum_{n=0}^{\infty}G(n, k) =\text{constant}.
\end{equation}

For a $d$-variable Laurent polynomial $P$, the logarithmic Mahler
measure is
\[
 m(P)=\int_0^1\cdots\int_0^1
 \ln\left|P(e^{2\pi i\theta_1},\ldots,e^{2\pi i\theta_d})\right|
 \,d\theta_1\cdots d\theta_d.
\]
Following the notions in~\cite{GR15}, write
$m(\alpha)=m(\alpha+x+x^{-1}+y+y^{-1})$.
Theorem~1 of that paper gives the three identities
\begin{align}
 2\ln 2&=1+\sum_{n=1}^{\infty}
 \frac{4n+1}{2n(2n+1)}\binom{2n}{n}^{\!2}\frac1{2^{4n}},
 \label{eq:2log}\\
 3\ln 2&=2+\sum_{n=1}^{\infty}
 \frac{6n+1}{2n(2n+1)}\binom{2n}{n}^{\!2}\frac1{2^{6n}},
 \label{eq:3log}\\
 8\ln 2&=\frac{11}{2}+\sum_{n=1}^{\infty}
 \frac{15n+2}{2n(2n+1)}\binom{2n}{n}^{\!2}\frac1{2^{8n}}.
 \label{eq:8log}
\end{align}
The last two are equivalent, respectively, to
\begin{equation}
 4m(2)=m(8),\qquad 11m(1)=m(16).
 \label{eq:mahler}
\end{equation}
Guillera and Rogers gave WZ proofs for \eqref{eq:2log} and
\eqref{eq:8log}. In \cite{GR15}, they left
the WZ proof of \eqref{eq:3log} unresolved. Our first purpose is to give
a pair for this case. The same pair will subsequently
prove the associated zeta identity and the finite congruences.

\subsection{Construction of the WZ pair}
\
\newline
Let
\begin{equation}
 U(n,k)=4^{-n}\frac{(1/2)_n(1/2+2k)_n}{(1)_n(1+k)_n}
                   \frac{(1/4)_k(3/4)_k}{(1)_k^2}
 \label{eq:kernel}
\end{equation}
and
\begin{equation}
 F(n,k)=-\frac{n}{2(n+k)}U(n,k),\qquad
 G(n,k)=\frac{k(6n+4k+1)}{2(n+k)(2n+1)}U(n,k).
 \label{eq:pair}
\end{equation}

\begin{prop}
The functions in \eqref{eq:pair} form a meromorphic WZ pair.
\label{prop:wz}
\end{prop}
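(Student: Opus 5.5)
The plan is to reduce the WZ relation \eqref{eq:wz} to an identity between rational functions. Dividing through by the common kernel $U(n,k)$ from \eqref{eq:kernel} leaves only rational expressions. Since all Pochhammer symbols are read as $(a)_z=\Gamma(a+z)/\Gamma(a)$, the only structural input is the functional equation $(a)_{z+1}=(a+z)(a)_z$. This holds for all complex $z$ away from poles, so the argument works for complex $n,k$ and not only for integers. The resulting identity is then extended by analytic continuation, which is why the pair is called meromorphic.

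First I would compute the two shift ratios of $U$. In the $n$-direction,
\[
\frac{U(n+1,k)}{U(n,k)}=\frac{(n+\tfrac12)(n+2k+\tfrac12)}{4(n+1)(n+k+1)}.
\]
In the $k$-direction, the factors $(1/2+2k)_n$, $(1+k)_n$, $(1/4)_k(3/4)_k$ and $(1)_k^2$ all change. I would use
\[
\frac{(2k+\tfrac52)_n}{(2k+\tfrac12)_n}=\frac{(2k+n+\tfrac12)(2k+n+\tfrac32)}{(2k+\tfrac12)(2k+\tfrac32)},
\qquad
\frac{(1+k)_n}{(2+k)_n}=\frac{k+1}{n+k+1},
\]
together with $(2k+\tfrac12)(2k+\tfrac32)=4(k+\tfrac14)(k+\tfrac34)$. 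The last relation cancels the factor $(k+\tfrac14)(k+\tfrac34)$ coming from $(1/4)_{k+1}(3/4)_{k+1}/((1/4)_k(3/4)_k)$. The expected result is
\[
\frac{U(n,k+1)}{U(n,k)}=\frac{(n+2k+\tfrac12)(n+2k+\tfrac32)}{4(k+1)(n+k+1)}.
\]

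Next I would substitute these ratios into $F(n+1,k)-F(n,k)-G(n,k+1)+G(n,k)$ and divide by $U(n,k)$. Clearing the denominators $(n+k)(n+k+1)(n+1)(k+1)(2n+1)$ leaves a polynomial identity in $n,k$ of low degree (about degree $5$). I would verify it by expanding both sides, or by checking enough specializations: for example, fix $k$ and compare coefficients in $n$. The factor $(2n+1)$ in the denominator of $G$ should combine with $(n+\tfrac12)$ in the $n$-ratio, and $(k+1)$ should cancel against the prefactor $k+1$ of $G(n,k+1)$. These cancellations are a useful consistency check.

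The main obstacle is only bookkeeping in the $k$-shift ratio. That ratio involves the doubled parameter $1/2+2k$, whose shift by $k\mapsto k+1$ moves the argument by $2$. This produces two linear factors, and their cancellation against the quarter-parameter Pochhammers is easy to get wrong. Once that ratio is correct, the final polynomial check is routine.
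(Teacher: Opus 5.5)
Your proposal is correct and follows the paper's proof. Your two shift ratios agree with \eqref{eq:n-ratio} and \eqref{eq:k-ratio}, since $(n+\tfrac12)(n+2k+\tfrac12)=\tfrac14(2n+1)(2n+4k+1)$ and similarly in the $k$-direction. The paper then carries out the rational check you describe, using the factorizations $n(2n+1)+k(6n+4k+1)=(n+k)(2n+4k+1)$ and $(2n+1)^2+(2n+4k+3)(6n+4k+5)=16(n+k+1)^2$; these reduce both $\bigl(G(n,k)-F(n,k)\bigr)/U(n,k)$ and $\bigl(G(n,k+1)-F(n+1,k)\bigr)/U(n,k)$ to $\frac{2n+4k+1}{2(2n+1)}$.

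One bookkeeping slip: after substituting the ratios, the factors $n+1$ and $k+1$ cancel outright. The common denominator is therefore $(n+k)(n+k+1)^2(2n+1)$, and the square on $n+k+1$ is missing from your list.
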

\begin{proof}
The shift quotients of the kernel $U(n, k)$ are
\begin{align}
 \frac{U(n+1,k)}{U(n,k)}
 &=\frac{(2n+1)(2n+4k+1)}{16(n+1)(n+k+1)},
 \label{eq:n-ratio}\\
 \frac{U(n,k+1)}{U(n,k)}
 &=\frac{(2n+4k+1)(2n+4k+3)}{16(k+1)(n+k+1)}.
 \label{eq:k-ratio}
\end{align}
Consequently,
\begin{align*}
 \frac{F(n+1,k)-F(n,k)}{U(n,k)}
 &=\frac{n}{2(n+k)}
   -\frac{(2n+1)(2n+4k+1)}{32(n+k+1)^2},\\
 \frac{G(n,k+1)-G(n,k)}{U(n,k)}
 &=\frac{(2n+4k+1)(2n+4k+3)(6n+4k+5)}
          {32(n+k+1)^2(2n+1)}\\
 &\hspace{1em}-\frac{k(6n+4k+1)}{2(n+k)(2n+1)}.
\end{align*}
The equality of these rational functions follows from
\begin{align*}
 n(2n+1)+k(6n+4k+1)&=(n+k)(2n+4k+1),\\
 (2n+1)^2+(2n+4k+3)(6n+4k+5)&=16(n+k+1)^2.
\end{align*}
This proves that $F(n,k)$ and $G(n,k)$ form a meromorphic WZ pair.
\end{proof}

\subsection{A proof of Formula \eqref{eq:3log}}
\
\newline
\indent In this section, we will prove the formula \eqref{eq:3log} about $3\ln 2$ through the WZ pair constructed in Proposition \ref{prop:wz}.
\begin{thm}\label{thm: parameter}
  The following identity is true:
  \begin{equation}\label{eq:parameter}
 \frac{4^x\sqrt\pi\,\Gamma(x)\Gamma(x+1)}{\Gamma(2x+1/2)}
 =\sum_{n=0}^{\infty}
 \frac{6n+4x+1}{(2n+1)(n+x)}
 \frac{(1/2)_n(1/2+2x)_n}{(1)_n(1+x)_n}\,4^{-n}.
 \end{equation}
 for $x\in \mathbb{R}_{>0}$.
\end{thm}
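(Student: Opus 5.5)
Note first that $G(n,x)=\frac{x(6n+4x+1)}{2(n+x)(2n+1)}U(n,x)$. So the right-hand side of \eqref{eq:parameter} equals
\[
\frac{2}{x}\,\frac{(1)_x^2}{(1/4)_x(3/4)_x}\sum_{n\ge0}G(n,x).
\]
The plan is to show that $S(x):=\sum_{n\ge0}G(n,x)$ is a constant $C$ independent of $x>0$ and to identify $C$. Then simplify the prefactor with the duplication formula $\Gamma(1/4+x)\Gamma(3/4+x)=\sqrt{2}\,\pi\,2^{-2x}\Gamma(2x+1/2)$. This gives
\[
\frac{(1)_x^2}{(1/4)_x(3/4)_x}=\frac{\Gamma(x+1)^2\,\Gamma(1/4)\Gamma(3/4)}{\Gamma(1/4+x)\Gamma(3/4+x)}=\frac{4^x\Gamma(x+1)^2}{\Gamma(2x+1/2)}.
\]
Hence the right-hand side is $\frac{2C}{x}\cdot\frac{4^x\Gamma(x+1)^2}{\Gamma(2x+1/2)}=2C\,\frac{4^x\Gamma(x)\Gamma(x+1)}{\Gamma(2x+1/2)}$, and we need $C=\sqrt\pi/2$.

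For the invariance we use Proposition \ref{prop:wz} as in the scheme \eqref{eq:periodic-method}. First, $F(0,x)=0$ trivially. Next, $F(n,x)\to0$ as $n\to\infty$ for fixed $x>0$: the $n$-dependence of $U$ is $4^{-n}\frac{\Gamma(n+1/2)\Gamma(n+1/2+2x)}{\Gamma(n+1)\Gamma(n+1+x)}\sim 4^{-n}n^{x-1}$, which decays geometrically. Summing the WZ relation over $n=0,\dots,N$ and letting $N\to\infty$ then gives $S(x+1)=S(x)$. The same geometric bound, uniform for $x$ in compacts, shows $S$ is continuous (indeed analytic) on $x>0$. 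Iterating gives $S(x)=\lim_{j\to\infty}S(x+j)$, so it remains to compute $\lim_{j\to\infty}S(x+j)$ and check it is independent of $x$.

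\textbf{Main obstacle: the asymptotics as $k=x+j\to\infty$.} The termwise limit of $G(n,k)$ is not simply zero or trivial. We have
\[
\frac{(1/4)_k(3/4)_k}{(1)_k^2}\sim\frac{k^{-1}}{\Gamma(1/4)\Gamma(3/4)}=\frac{1}{\sqrt2\,\pi k},
\qquad
\frac{(1/2+2k)_n}{(1+k)_n}\to 2^n,
\]
and $\frac{k(6n+4k+1)}{2(n+k)(2n+1)}\to\frac{2k}{2n+1}$. So $G(n,k)\sim\frac{\sqrt2}{\pi}\frac{(1/2)_n}{n!(2n+1)}2^{-n}$ for each fixed $n$. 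Termwise, this gives
\[
C=\frac{\sqrt2}{\pi}\sum_{n}\frac{(1/2)_n}{n!}\frac{(1/2)^n}{2n+1}=\frac{\sqrt2}{\pi}\int_0^1(1-t^2/2)^{-1/2}dt=\frac{\sqrt2}{\pi}\cdot\sqrt2\arcsin(1/\sqrt2)=\frac12.
\]
This does not equal $\sqrt\pi/2$, so the normalization has to be rechecked: the $\Gamma(1/4)\Gamma(3/4)$ factor and the $\Gamma(1/2)=\sqrt\pi$ implicit in the $4^x\sqrt\pi$ must be tracked carefully. Whatever the constant turns out to be, the method is to pin it down by this limit and cross-check it by letting $x\to\infty$ directly in \eqref{eq:parameter} using Stirling.

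To justify interchanging limit and sum, we need a dominating bound uniform in large $k$. The factor $\frac{(1/2+2k)_n}{(1+k)_n}$ is increasing toward $2^n\cdot(1+O(n/k))$. Its ratio is $\prod_i\frac{1/2+2k+i}{1+k+i}\le 2^n$, since each factor is $<2$. Hence $|G(n,k)|\le c\,\frac{(1/2)_n}{n!}2^{-n}$ up to bounded factors, which is summable, and dominated convergence applies. I expect the delicate point to be exactly this constant bookkeeping; the WZ telescoping itself is routine given Proposition \ref{prop:wz}.
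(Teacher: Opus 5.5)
Your strategy is the paper's: write the right-hand side as $\frac{2}{x}\frac{(1)_x^2}{(1/4)_x(3/4)_x}\sum_n G(n,x)$, use $F(0,x)=0$ and $F(n,x)\to 0$ to get $S(x+1)=S(x)$, then evaluate $\lim_{j\to\infty}S(x+j)$ by dominated convergence. The majorant comes from $\frac{(1/2+2k)_n}{(1+k)_n}\le 2^n$ and the boundedness of $k\frac{(1/4)_k(3/4)_k}{(1)_k^2}$. Your value of the limit,
\[
C=\frac{\sqrt2}{\pi}\sum_n\frac{(1/2)_n}{n!}\frac{2^{-n}}{2n+1}=\frac12,
\]
is correct and coincides with the paper's. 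However, as written the proposal does not prove the theorem. It ends with an unresolved mismatch ($C=\frac12$ versus the ``required'' $\sqrt\pi/2$). Taken at face value, it would make the right-hand side equal to $\frac{4^x\Gamma(x)\Gamma(x+1)}{\Gamma(2x+1/2)}$, which contradicts the statement.

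The mismatch comes from a slip in your duplication step, not from the constant in the limit. Legendre's formula with $z=x+\frac14$ gives
\[
\Gamma\left(x+\tfrac14\right)\Gamma\left(x+\tfrac34\right)=2^{1/2-2x}\sqrt{\pi}\,\Gamma\left(2x+\tfrac12\right)=\sqrt{2\pi}\,2^{-2x}\,\Gamma\left(2x+\tfrac12\right).
\]
The factor is $\sqrt{2\pi}$, not $\sqrt2\,\pi$. You can check this at $x=0$: your version would give $\Gamma(\frac14)\Gamma(\frac34)=\sqrt2\,\pi^{3/2}$, whereas the reflection formula gives $\sqrt2\,\pi$. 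Combining the corrected formula with $\Gamma(\frac14)\Gamma(\frac34)=\sqrt2\,\pi$ yields
\[
\frac{(1)_x^2}{(1/4)_x(3/4)_x}=\frac{4^x\sqrt{\pi}\,\Gamma(x+1)^2}{\Gamma(2x+1/2)}.
\]
So the right-hand side equals $2C\cdot\frac{4^x\sqrt\pi\,\Gamma(x)\Gamma(x+1)}{\Gamma(2x+1/2)}$, and the needed constant is $C=\frac12$, exactly the value you computed. With this correction your argument is complete and agrees with the paper's proof; the proposed Stirling cross-check is then unnecessary.
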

\begin{proof}
We have
\begin{align*}
 &U(n,k)=4^{-n}\frac{(1/2)_n(1/2+2k)_n}{(1)_n(1+k)_n}
                   \frac{(1/4)_k(3/4)_k}{(1)_k^2}\\
F(n,k)&=-\frac{n}{2(n+k)}U(n,k),\
 G(n,k)=\frac{k(6n+4k+1)}{2(n+k)(2n+1)}U(n,k).         
 \end{align*}
as a WZ pair and $F(0,k)=0$ is trivial. First, we will prove $F(n,k)\rightarrow 0$ as $n\rightarrow \infty$.

For $k>0$, we have
\[
0<\frac{(1/2+2k)_n}{(1+k)_n}=\prod_{j=0}^{n-1}\frac{2k+j+1/2}{k+j+1}\leq 2^n.
\]
Hence
\begin{equation}\label{eq:Unk estimate}
  0<U(n,k)\leq \dfrac{(1/4)_k(3/4)_k}{(1)_k^2}\dfrac{(1/2)_n}{(1)_n}2^{-n}=\dfrac{(1/4)_k(3/4)_k}{(1)_k^2}\binom{2n}{n}8^{-n}.
\end{equation}
Then
\[
\lim_{n\to\infty}F(n,k)=0.
\]

Summing the WZ relation over $n\ge 0$, we obtain
\[
 \sum_{n=0}^{\infty}G(n,k+1)=\sum_{n=0}^{\infty}G(n,k).
\]
To determine its value, fix $k_0>0$. By the Gamma-ratio asymptotic $\frac{\Gamma(z+a)}{\Gamma(z+b)}\sim z^{a-b}$ as $z\rightarrow\infty$ (see \cite[Formula 5.11.12]{DLMF}),
\begin{equation}\label{eq:C(k)estimate}
  k\frac{(1/4)_k(3/4)_k}{(1)_k^2}=k\frac{1}{\Gamma(1/4)\Gamma(3/4)}\frac{\Gamma(k+1/4)}{\Gamma(k+1)}\frac{\Gamma(k+3/4)}{\Gamma(k+1)}\longrightarrow
\frac{1}{\Gamma(1/4)\Gamma(3/4)}
=
\frac{1}{\pi\sqrt{2}}
\ (k\to\infty),
\end{equation}
and so it is bounded for $k\ge k_0$. Moreover,
\begin{equation}\label{eq:6n/n estimate}
  \frac{6n+4k+1}{n+k}\le 6+\frac{1}{k_0}.
\end{equation}
Followed by \eqref{eq:Unk estimate},\eqref{eq:C(k)estimate} and \eqref{eq:6n/n estimate}, we have
\[
0\le G(n,k)\leq C_{k_0}
\dfrac{\binom{2n}{n}8^{-n}}{2n+1},\quad\mbox{where  } C_{k_0}\ \mathrm{is\ a\ constant\ only\  about}\ k_0
\]
and the majorant on the right is summable. Hence the dominated convergence applies.

For fixed $n$, as $k\rightarrow\infty$,
\[
\frac{(1/2+2k)_n}{(1+k)_n}\longrightarrow 2^n,\ k\dfrac{(1/4)_k(3/4)_k}{(1)_k^2}\longrightarrow \dfrac{1}{\pi\sqrt{2}},\ \frac{6n+4k+1}{n+k}\longrightarrow 4
\]
and consequently
\[
G(n,k)\longrightarrow
\frac{\sqrt{2}}{\pi}
\dfrac{\binom{2n}{n}8^{-n}}{2n+1}.
\]
Since $\sum_{n=0}^{\infty}G(n,k+1)=\sum_{n=0}^{\infty}G(n,k)$ for every integer $j\ge 0$, letting
$j\to\infty$ gives
\[
\sum_{n=0}^{\infty}G(n,k)=
\frac{\sqrt{2}}{\pi}
\sum_{n=0}^{\infty}
\dfrac{\binom{2n}{n}8^{-n}}{2n+1}.
\]
Using
\[
\sum_{n=0}^{\infty}\frac{\binom{2n}{n}4^{-n}t^{2n}}{2n+1}
=
\frac{\arcsin t}{t},
\]
and taking $t=1/\sqrt{2}$, we obtain
\[
\sum_{n=0}^{\infty}G(n,k)=\frac12.
\]

On the other hand, from the definition of $G(n,k)$,
\[
\sum_{n=0}^{\infty}G(n,k)
=
\frac{k(1/4)_k(3/4)_k}{2(1)_k^2}
\sum_{n=0}^{\infty}
\frac{6n+4k+1}{(2n+1)(n+k)}
\frac{(1/2)_n(1/2+2k)_n}
{(1)_n(1+k)_n}
4^{-n}.
\]
Therefore,
\[
\sum_{n=0}^{\infty}
\frac{6n+4k+1}{(2n+1)(n+k)}
\frac{(1/2)_n(1/2+2k)_n}
{(1)_n(1+k)_n}
4^{-n}
=
\dfrac{(1)_k^2}{k(1/4)_k(3/4)_k}=4^k\sqrt{\pi}
\frac{\Gamma(k)\Gamma(k+1)}
{\Gamma(2k+1/2)}.
\]
and relabeling $k$ as $x$ proves the theorem.
\end{proof}

\begin{coro}\label{coro:3log2}
The following formula is true:
\begin{align*}
  3\ln2=2+\sum_{n=1}^{\infty}
 \frac{6n+1}{2n(2n+1)}\binom{2n}{n}^{\!2}\frac1{64^n}.
\end{align*}
\end{coro}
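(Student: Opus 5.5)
The plan is to let $x\to0^+$ in the identity \eqref{eq:parameter} of Theorem~\ref{thm: parameter}. The divergent terms on the two sides cancel, and the constant terms of the two expansions then give the formula.

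\emph{Left-hand side.} Rewrite it as
\[
\frac{4^x\sqrt\pi\,\Gamma(x+1)^2}{x\,\Gamma(2x+1/2)}.
\]
To first order in $x$:
\begin{itemize}
\item $4^x=1+2x\ln2+O(x^2)$;
\item $\Gamma(x+1)^2=1-2\gamma x+O(x^2)$;
\item $\Gamma(2x+1/2)=\sqrt\pi\bigl(1+2x\psi(1/2)+O(x^2)\bigr)$, with $\psi(1/2)=-\gamma-2\ln2$.
\end{itemize}
Combining these,
\[
\frac{4^x\sqrt\pi\,\Gamma(x+1)^2}{x\,\Gamma(2x+1/2)}=\frac1x\Bigl(1+x(2\ln2-2\gamma+2\gamma+4\ln2)+O(x^2)\Bigr)=\frac1x+6\ln2+O(x).
\]

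\emph{Right-hand side.} The $n=0$ term is exactly $\frac{4x+1}{x}=\frac1x+4$. For $n\ge1$ the summand is continuous at $x=0$. Its limit is
\[
\frac{6n+1}{(2n+1)n}\,\frac{(1/2)_n^2}{(1)_n^2}\,4^{-n}=\frac{6n+1}{n(2n+1)}\binom{2n}{n}^2 64^{-n},
\]
using $(1/2)_n/(1)_n=\binom{2n}{n}4^{-n}$.

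\emph{Passing to the limit.} To take the limit inside the sum, I use dominated convergence on $0<x\le1$:
\begin{itemize}
\item $\frac{6n+4x+1}{(2n+1)(n+x)}\le \frac{11}{2n+1}$ for $n\ge1$;
\item $\frac{(1/2+2x)_n}{(1+x)_n}\le \frac{(5/2)_n}{(1)_n}=O(n^{3/2})$;
\item $(1/2)_n/(1)_n=O(n^{-1/2})$.
\end{itemize}
So the summand is $O(n^{0}\cdot 4^{-n})$, which is summable.

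\emph{Conclusion.} Subtracting $\frac1x$ from both sides and letting $x\to0^+$ gives
\[
6\ln2=4+\sum_{n\ge1}\frac{6n+1}{n(2n+1)}\binom{2n}{n}^2 64^{-n}.
\]
Halving yields the corollary.

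The main point of care is the $O(x)$ expansion of the Gamma quotient, specifically the value $\psi(1/2)=-\gamma-2\ln 2$, which makes the $\gamma$ terms cancel and produces the $\ln2$. The uniform domination is routine.
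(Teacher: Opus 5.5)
Your proposal is correct and follows the paper's proof essentially step for step: you isolate the $n=0$ term $\frac1x+4$, take termwise limits justified by dominated convergence on $0<x\le 1$, and expand the Gamma quotient as $\frac1x+6\ln2+O(x)$ using $\psi(1)=-\gamma$ and $\psi(1/2)=-\gamma-2\ln2$. The only differences are cosmetic: you expand each factor directly where the paper takes the logarithmic derivative of $B(x)=xA(x)$, and your majorant uses $(5/2)_n/(1)_n$ where the paper uses the simpler, equally sufficient bound $\frac{(1/2+2x)_n}{(1+x)_n}\le 2^n$.
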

\begin{proof}
  We first consider the right-hand side of \eqref{eq:parameter}. The term corresponding
to $n=0$ is
\[
\frac{4x+1}{x}=\frac1x+4.
\]
For $n\ge 1$, as $x\to 0^+$,
\[
\frac{6n+4x+1}{(2n+1)(n+x)}\frac{(1/2)_n(1/2+2x)_n}{(1)_n(1+x)_n}4^{-n}\longrightarrow\frac{6n+1}{n(2n+1)}\,c_n^2\,4^{-n},
\]
where $c_n=\frac{(1/2)_n}{(1)_n}=4^{-n}\binom{2n}{n}$.

Moreover, for $0\le x\le 1$, 
\[
\dfrac{6n+4x+1}{n+x}\le 7\quad \mathrm{and}\quad \dfrac{(1/2+2x)_n}{(1+x)_n}=\prod_{j=0}^{n-1}
\frac{2x+j+1/2}{x+j+1}\le 2^n
\] 
then
\[
\left|
\frac{6n+4x+1}{(2n+1)(n+x)}
\frac{(1/2)_n(1/2+2x)_n}{(1)_n(1+x)_n}
4^{-n}
\right|
\le
\frac{7c_n2^{-n}}{2n+1}.
\]
Dominated convergence gives
\[
\lim_{x\to0^+}
\sum_{n=1}^{\infty}
\frac{6n+4x+1}{(2n+1)(n+x)}
\frac{(1/2)_n(1/2+2x)_n}{(1)_n(1+x)_n}
4^{-n}
=
\sum_{n=1}^{\infty}
\frac{6n+1}{n(2n+1)}c_n^2\,4^{-n}.
\]
Therefore,
\begin{equation}\label{eq:rhs 3log2}
  \sum_{n=0}^{\infty}
\frac{6n+4x+1}{(2n+1)(n+x)}
\frac{(1/2)_n(1/2+2x)_n}{(1)_n(1+x)_n}
4^{-n}
=
\frac1x+4
+
\sum_{n=1}^{\infty}
\frac{6n+1}{n(2n+1)}c_n^2\,4^{-n}
+o(1)
\end{equation}
as $x\to0^+$.

For the left-hand side of \eqref{eq:parameter}, set
\[
A(x):=\frac{4^x\sqrt{\pi}\,\Gamma(x)\Gamma(x+1)}{\Gamma(2x+1/2)}\ \mathrm{and}\ B(x):=xA(x)=\frac{4^x\sqrt{\pi}\,\Gamma(x+1)^2}
{\Gamma(2x+1/2)}.
\]
Then $B(x)$ is regular at $x=0$, and $B(0)=1$.
Taking logarithmic derivatives gives
\[
\frac{B'(x)}{B(x)}
=
\ln 4
+2\psi(x+1)
-2\psi(2x+1/2),
\]
where $\psi(z):=(\ln\Gamma(z))'=\frac{\Gamma'(z)}{\Gamma(z)}$.
Followed by \cite[Formula 5.4.12, 5.4.13]{DLMF}, we have $\psi(1)=-\gamma$ and $\psi(1/2)=-\gamma-2\ln 2$, where $\gamma$ is the Euler constant. Then
\[
B'(0)=[\ln 4+2\psi(1)-2\psi(1/2)]B(0)=6\ln 2.
\]
Therefore,
\[
B(x)=1+6x\ln 2+O(x^2)\qquad (x\to0^+),
\]
and consequently
\begin{equation}\label{eq:lhs 3log2}
  \frac{4^x\sqrt{\pi}\,\Gamma(x)\Gamma(x+1)}
{\Gamma(2x+1/2)}
=
\frac1x+6\ln 2+O(x)
\qquad (x\to0^+).
\end{equation}

Comparing \eqref{eq:rhs 3log2} and \eqref{eq:lhs 3log2}, the formula
\[
3\ln 2=2+\sum_{n=1}^{\infty}
 \frac{6n+1}{2n(2n+1)}\binom{2n}{n}^{\!2}\frac1{64^n}.
\]
holds. This finishes the proof.
\end{proof}

\subsection{Another formula of $\zeta(3)$}
\
\newline
\indent In this section, we will prove the formula 
left in \cite[Theorem 3]{GR15}.
\begin{lem}\label{lem:Unk rep}
  $U(n,k)$ has an equivalent representation:
  \[
  U(n,k)=4^{-n-k}\frac{\Gamma(n+1/2)\Gamma(n+2k+1/2)}{\pi\,\Gamma(n+1)\Gamma(n+k+1)\Gamma(k+1)}.\]
\end{lem}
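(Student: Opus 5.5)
We convert each Pochhammer symbol in \eqref{eq:kernel} into a ratio of Gamma values via $(a)_z=\Gamma(a+z)/\Gamma(a)$, and then simplify with the duplication formula. Directly,
\[
\frac{(1/2)_n}{(1)_n}=\frac{\Gamma(n+1/2)}{\sqrt{\pi}\,\Gamma(n+1)},\qquad
\frac{(1/2+2k)_n}{(1+k)_n}=\frac{\Gamma(n+2k+1/2)}{\Gamma(2k+1/2)}\cdot\frac{\Gamma(k+1)}{\Gamma(n+k+1)},
\]
using $\Gamma(1/2)=\sqrt\pi$. The only nontrivial factor is the $k$-part
\[
\frac{(1/4)_k(3/4)_k}{(1)_k^2}=\frac{\Gamma(k+1/4)\Gamma(k+3/4)}{\Gamma(1/4)\Gamma(3/4)\,\Gamma(k+1)^2}.
\]

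For the $k$-part we apply the Legendre duplication formula $\Gamma(z)\Gamma(z+1/2)=2^{1-2z}\sqrt\pi\,\Gamma(2z)$ with $z=k+1/4$. This gives $\Gamma(k+1/4)\Gamma(k+3/4)=2^{1/2-2k}\sqrt\pi\,\Gamma(2k+1/2)$. The case $k=0$ (or reflection, $\Gamma(1/4)\Gamma(3/4)=\pi\sqrt2$, already used in \eqref{eq:C(k)estimate}) yields $\Gamma(1/4)\Gamma(3/4)=\sqrt2\,\sqrt\pi\cdot\sqrt\pi$. Hence
\[
\frac{(1/4)_k(3/4)_k}{(1)_k^2}=\frac{4^{-k}\,\Gamma(2k+1/2)}{\sqrt\pi\,\Gamma(k+1)^2}.
\]

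Multiplying the three pieces together with $4^{-n}$, the factor $\Gamma(2k+1/2)$ cancels between the second and third pieces. One factor $\Gamma(k+1)$ also cancels, and the two factors $\sqrt\pi$ combine to $\pi$. What remains is
\[
4^{-n-k}\frac{\Gamma(n+1/2)\Gamma(n+2k+1/2)}{\pi\,\Gamma(n+1)\Gamma(n+k+1)\Gamma(k+1)},
\]
which is the asserted formula. It holds as an identity of meromorphic functions wherever both sides are defined, since every step is an identity of meromorphic functions.

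The main obstacle is only bookkeeping of the constants: the power of $2$ from duplication must become exactly $4^{-k}$, and the $\sqrt\pi$ factors must combine to a single $\pi$. Checking the case $n=k=0$, where both sides equal $1$, confirms the constants.
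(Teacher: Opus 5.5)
Your proof is correct and follows essentially the paper's route: you rewrite each Pochhammer symbol as a Gamma ratio, apply Legendre duplication at $z=k+1/4$ so that $\Gamma(2k+1/2)$ cancels, and use $\Gamma(1/4)\Gamma(3/4)=\pi\sqrt{2}$ to produce the factor $4^{-k}/\pi$. The only cosmetic difference is that you also allow obtaining $\Gamma(1/4)\Gamma(3/4)$ from the $k=0$ case of the duplication formula, whereas the paper uses the reflection formula; both are valid.
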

\begin{proof}
From the definition
\[
U(n,k)=4^{-n}\frac{(1/2)_n(1/2+2k)_n}{(1)_n(1+k)_n}
\frac{(1/4)_k(3/4)_k}{(1)_k^2},
\]
and the identity $(a)_m=\Gamma(a+m)/\Gamma(a)$, we obtain
\[
U(n,k)=
4^{-n}
\frac{\Gamma(n+1/2)\Gamma(n+2k+1/2)}{\Gamma(n+1)\Gamma(n+k+1)\Gamma(k+1)}
R_k,
\]
where
\[
R_k=
\frac{\Gamma(k+1/4)\Gamma(k+3/4)}{\Gamma(1/2)\Gamma(2k+1/2)
 \Gamma(1/4)\Gamma(3/4)}.
\]

By Legendre's duplication formula (see \cite[Formula 5.5.5]{DLMF}), $\Gamma(z)\Gamma\left(z+\frac12\right)=2^{1-2z}\sqrt{\pi}\,\Gamma(2z)$. Let $z=k+\frac14$, we have
\[
\Gamma\left(k+\frac14\right)
\Gamma\left(k+\frac34\right)=
2^{1/2-2k}\sqrt{\pi}
\Gamma\left(2k+\frac12\right).
\]
Moreover, since $\Gamma\left(\frac12\right)=\sqrt{\pi}$ and $\Gamma\left(\frac14\right)
\Gamma\left(\frac34\right)=
\frac{\pi}{\sin(\pi/4)}=
\pi\sqrt{2}$,
\[
R_k=\frac{4^{-k}}{\pi}.
\]
Substituting this into the preceding expression yields
\[
U(n,k)
=
4^{-n-k}
\frac{\Gamma(n+1/2)\Gamma(n+2k+1/2)}
{\pi\,\Gamma(n+1)\Gamma(n+k+1)\Gamma(k+1)},
\]
as required.
\end{proof}
\begin{coro}\label{coro:U(n+1/2,k)}
For every integer $n\ge 0$ and $k\in\mathbb{R}_{\geq0}$,
\[
U\left(n+\frac12,k\right)=
\frac{2}{\pi^2(2k+1)}\,4^{-n}
\frac{(1)_n(1+2k)_n}
     {(3/2)_n(k+3/2)_n}.
\]
\end{coro}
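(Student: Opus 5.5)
Substitute $n\mapsto n+\tfrac12$ directly into the Gamma representation of Lemma~\ref{lem:Unk rep}:
\[
U\left(n+\tfrac12,k\right)=4^{-n-k-1/2}\frac{\Gamma(n+1)\Gamma(n+2k+1)}{\pi\,\Gamma(n+3/2)\Gamma(n+k+3/2)\Gamma(k+1)}.
\]
Next, rewrite every Gamma value as a Pochhammer symbol times its value at the base point:
\[
\Gamma(n+1)=(1)_n,\qquad \Gamma(n+2k+1)=(1+2k)_n\,\Gamma(2k+1),
\]
\[
\Gamma(n+3/2)=(3/2)_n\,\Gamma(3/2)=(3/2)_n\,\tfrac{\sqrt\pi}{2},\qquad \Gamma(n+k+3/2)=(k+3/2)_n\,\Gamma(k+3/2).
\]
This isolates the claimed $n$-dependence $4^{-n}\frac{(1)_n(1+2k)_n}{(3/2)_n(k+3/2)_n}$, leaving a prefactor depending only on $k$:
\[
C_k=4^{-k-1/2}\cdot\frac{2}{\pi\sqrt\pi}\cdot\frac{\Gamma(2k+1)}{\Gamma(k+1)\Gamma(k+3/2)}.
\]

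The only real step is to evaluate $C_k$. Apply Legendre's duplication formula (as already used in the proof of Lemma~\ref{lem:Unk rep}) with $z=k+\tfrac12$:
\[
\Gamma\left(k+\tfrac12\right)\Gamma(k+1)=2^{-2k}\sqrt\pi\,\Gamma(2k+1).
\]
Hence $\Gamma(2k+1)/\Gamma(k+1)=4^k\Gamma(k+1/2)/\sqrt\pi$. Combining this with $\Gamma(k+3/2)=(k+\tfrac12)\Gamma(k+\tfrac12)$ gives
\[
\frac{\Gamma(2k+1)}{\Gamma(k+1)\Gamma(k+3/2)}=\frac{4^k}{\sqrt\pi\,(k+1/2)}=\frac{2\cdot 4^k}{\sqrt\pi\,(2k+1)}.
\]
Substituting back,
\[
C_k=4^{-k}\cdot\tfrac12\cdot\frac{2}{\pi^{3/2}}\cdot\frac{2\cdot 4^k}{\sqrt\pi(2k+1)}=\frac{2}{\pi^2(2k+1)},
\]
which is the stated constant.

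There is no genuine obstacle here; the only places to be careful are the factor $4^{-1/2}=\tfrac12$ coming from the half-integer shift and the value $\Gamma(3/2)=\sqrt\pi/2$. All Gamma arguments are positive for $k\ge0$ and $n\ge0$, so no poles occur and the identity holds on the stated range. As a sanity check, I would verify the case $n=0$, $k=0$ directly from Lemma~\ref{lem:Unk rep}: $U(1/2,0)=\tfrac12\cdot\Gamma(1)^2/(\pi\,\Gamma(3/2)^2)=2/\pi^2$, which matches the formula.
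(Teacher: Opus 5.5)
Your proof is correct and follows the paper's argument: substitute $n\mapsto n+\tfrac12$ into Lemma~\ref{lem:Unk rep}, split off the Pochhammer symbols, and evaluate the $k$-dependent prefactor using $\Gamma(3/2)=\sqrt\pi/2$ and Legendre's duplication formula. The only cosmetic difference is that you apply duplication at $z=k+\tfrac12$ together with $\Gamma(k+\tfrac32)=(k+\tfrac12)\Gamma(k+\tfrac12)$, whereas the paper takes $z=k+1$ and uses $\Gamma(2k+2)=(2k+1)\Gamma(2k+1)$; both routes give $\Gamma(2k+1)/\bigl(\Gamma(k+1)\Gamma(k+\tfrac32)\bigr)=2^{2k+1}/\bigl(\sqrt\pi\,(2k+1)\bigr)$.
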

\begin{proof}
By the preceding lemma,
\[
U\left(n+\frac12,k\right)=
4^{-n-k-1/2}
\frac{\Gamma(n+1)\Gamma(n+2k+1)}{\pi\,\Gamma(n+3/2)\Gamma(n+k+3/2)\Gamma(k+1)}.
\]
Using $(a)_n=\Gamma(a+n)/\Gamma(a)$, we obtain
\[
U\left(n+\frac12,k\right)=
4^{-n-k-1/2}
\frac{\Gamma(2k+1)}{\pi\,\Gamma(3/2)\Gamma(k+3/2)\Gamma(k+1)}
\frac{(1)_n(1+2k)_n}{(3/2)_n(k+3/2)_n}.
\]
Since $\Gamma\left(\frac32\right)=\frac{\sqrt{\pi}}{2}$ and Legendre's duplication formula (see \cite[Formula 5.5.5]{DLMF}), applied with $z=k+1$, we have:
\[
\Gamma(k+1)\Gamma\left(k+\frac32\right)=
2^{-2k-1}\sqrt{\pi}\,\Gamma(2k+2)=2^{-2k-1}(2k+1)\sqrt{\pi}\,\Gamma(2k+1).
\]
Then
\[
U\left(n+\frac12,k\right)=
\frac{2}{\pi^2(2k+1)}\,4^{-n}
\frac{(1)_n(1+2k)_n}
     {(3/2)_n(k+3/2)_n},
\]
as claimed.
\end{proof}

\begin{thm}\label{thm:zeta3}
  The following formula is true:
  \begin{align*}
 \zeta(3)=\frac47\sum_{n=0}^{\infty}
 \frac{(3n+2)4^n}{(2n+1)^3(n+1)\binom{2n}{n}^{2}}.
\end{align*}
\end{thm}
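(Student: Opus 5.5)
The plan is to reuse the WZ pair of Proposition \ref{prop:wz} with the first variable shifted by $1/2$. Then Corollary \ref{coro:U(n+1/2,k)} turns the $n$-sum into an inverse-binomial-square sum, and a derivative in $k$ at $k=0$ produces $\zeta(3)$.

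\textbf{Telescoping in $n$.} The relation \eqref{eq:wz} is an identity of meromorphic functions, so it also holds at $(n+\frac12,k)$. Summing over $n\ge 0$ telescopes the left side to $-F(\frac12,k)$, once we know $F(n+\frac12,k)\to0$. That limit follows from Corollary \ref{coro:U(n+1/2,k)}: the ratio $(1+2k)_n/(k+3/2)_n$ is at most $2^n$ for $k\ge 0$, and $(1)_n/(3/2)_n=4^n/((2n+1)\binom{2n}{n})$ is $O(\sqrt n)$, so $U(n+\frac12,k)=O(\sqrt n\,2^{-n})$. Since $F(\frac12,k)=-\frac{1}{2(2k+1)}U(\frac12,k)$ and $U(\frac12,k)=\frac{2}{\pi^2(2k+1)}$, setting $S(k):=\sum_{n\ge0}G(n+\frac12,k)$ gives
\[
S(k+1)-S(k)=\frac{1}{\pi^2(2k+1)^2}.
\]
Hence $T(k):=S(k)+\frac{1}{\pi^2}\sum_{j\ge0}(2k+2j+1)^{-2}$ satisfies $T(k+1)=T(k)$ for real $k\ge0$.

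\textbf{$T$ is constant.} I will show $\lim_{k\to\infty}S(k)$ exists. As $k\to\infty$ with $n$ fixed, Corollary \ref{coro:U(n+1/2,k)} gives
\[
U(n+\tfrac12,k)\sim \frac{1}{\pi^2 k}\,2^{-n}\frac{(1)_n}{(3/2)_n},
\qquad
\frac{k(6n+4k+4)}{(2n+2k+1)(2n+2)}\to \frac{2k}{2n+2}\cdot\frac1k\cdot k .
\]
So $G(n+\frac12,k)$ tends to a finite limit proportional to $2^{-n}(1)_n/((n+1)(3/2)_n)$. A majorant uniform in $k\ge k_0$ comes from the same bounds as in the proof of Theorem \ref{thm: parameter}, so dominated convergence applies. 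The series $\sum_j(2k+2j+1)^{-2}$ tends to $0$ as $k\to\infty$. Therefore the $1$-periodic function $T$ has a limit at infinity, hence is constant on $[0,\infty)$, and $T'(0)=0$.

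\textbf{Differentiating at $k=0$.} Because $G(n+\frac12,k)$ carries the factor $k$, the derivative at $0$ is
\[
S'(0)=\sum_{n\ge0}\frac{6n+4}{(2n+1)(2n+2)}\,U(n+\tfrac12,0)
=\frac{2}{\pi^2}\sum_{n\ge0}\frac{(3n+2)\,4^n}{(2n+1)^3(n+1)\binom{2n}{n}^2}.
\]
This uses $U(n+\frac12,0)=\frac{2}{\pi^2}\,\frac{4^n}{(2n+1)^2\binom{2n}{n}^2}$. On the other side,
\[
-\frac{d}{dk}\Big|_{k=0}\frac{1}{\pi^2}\sum_{j\ge0}(2k+2j+1)^{-2}=\frac{4}{\pi^2}\sum_{j\ge0}(2j+1)^{-3}=\frac{7\zeta(3)}{2\pi^2}.
\]
Equating $S'(0)=\frac{7\zeta(3)}{2\pi^2}$ gives exactly the stated formula.

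\textbf{Main obstacle.} The analytic justification is the delicate part: both the interchange of $\frac{d}{dk}$ with $\sum_n$ and the dominated limit as $k\to\infty$. For the derivative, I would view $G(n+\frac12,k)$ as holomorphic in $k$ on a small complex disc around $[0,1]$. There I would bound $|(1+2k)_n/(k+3/2)_n|$ by $C\,(2+\varepsilon)^n$ uniformly, so that the tail is dominated by a geometric series with ratio below $1$. Weierstrass' theorem then allows termwise differentiation of $S$ and of the Hurwitz-type series at $k=0$.
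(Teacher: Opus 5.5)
Your proposal is correct and follows the paper's proof. You use the same half-integer shift, telescope in $n$ using $F(\tfrac12,k)=-\frac{1}{\pi^2(2k+1)^2}$, telescope in $k$ against $L=\lim_{k\to\infty}S(k)$, and differentiate at $k=0$. Your constant periodic function $T$ is exactly the paper's identity $\sum_k F(\tfrac12,k+y)=S(y)-L$.

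The one difference is how you justify the termwise derivative. The paper uses $S(0)=0$ and dominates $G(n+\tfrac12,y)/y$ by $\frac{4}{\pi^2}\frac{2^{-n}}{n+1}$, which is simpler than your holomorphic-disc argument.
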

\begin{proof}
We use the WZ pair in \eqref{eq:kernel}, \eqref{eq:pair} and shift the WZ identity \eqref{eq:wz} by $n\mapsto n+\frac12$ and $k\mapsto k+y, y\ge 0$. Summing the equation over $n\ge 0$ and using the fact $\lim_{n\to\infty}
F\left(n+\frac12,k+y\right)=0$, we have
\[
F\left(\frac12,k+y\right)
=
\sum_{n=0}^{\infty}
G\left(n+\frac12,k+y\right)-\sum_{n=0}^{\infty}
G\left(n+\frac12,k+y+1\right).
\]
Sum both sides of the equation from $k=0$ to $k=\infty$:
\begin{equation}\label{eq:k sum}
  \sum_{k=0}^{\infty}
F\left(\frac12,k+y\right)
=
\sum_{n=0}^{\infty}
G\left(n+\frac12,y\right)-\lim_{k\to\infty}\sum_{n=0}^{\infty}
G\left(n+\frac12,k+y\right).
\end{equation}

We first verify the limit of the summation of $G(n+\frac12,k+y)$ as $k\to\infty$. By Corollary \ref{coro:U(n+1/2,k)},
\[
U\left(n+\frac12,k\right)
=
\frac{2}{\pi^2(2k+1)}\,4^{-n}
\frac{(1)_n(1+2k)_n}
     {(3/2)_n(k+3/2)_n}.
\]
Since
\[
\frac{(1+2k)_n}{(k+3/2)_n}=\prod_{j=0}^{n-1}\frac{2k+1+j}{k+3/2+j} \le 2^n\quad \mathrm{and}\quad \frac{(1)_n}{(3/2)_n}\le 1,
\]
we have
\[
0<
U\left(n+\frac12,k\right)
\le
\frac{2}{\pi^2(2k+1)}2^{-n}.
\]
Moreover,
\[
G\left(n+\frac12,k\right)
=
U\left(n+\frac12,k\right)
\frac{k(3n+2k+2)}
     {2(n+k+1/2)(n+1)}.
\]
Since
\[
\frac{3n+2k+2}{n+k+1/2}\le 4,
\]
it follows that
\[
0\le
G\left(n+\frac12,k\right)
\le
\frac{2}{\pi^2}
\frac{2^{-n}}{n+1}.
\]
Thus $\sum_{n=0}^{\infty}
G\left(n+\frac12,k\right)$ converges uniformly for $k\ge 0$. For each fixed $n$, the limit of
$G(n+\frac12,k)$ as $k\to\infty$ exists. Hence, by the above
summable majorant and dominated convergence, denote
\[
\sum_{n=0}^{\infty}
G\left(n+\frac12,k\right)\longrightarrow L
\qquad (k\to\infty)
\]
for some finite constant $L$.

Now followed by the equation \eqref{eq:k sum} and $L$ is independent of $y$, we obtain
\begin{equation}\label{eq: sum k+y}
  \sum_{k=0}^{\infty}
F\left(\frac12,k+y\right)
=
\sum_{n=0}^{\infty}
G\left(n+\frac12,y\right)-L.
\end{equation}
At $y=0$, the explicit factor $k$ in $G(n+1/2,k)$ gives $G\left(n+\frac12,0\right)=0$ and hence the summation is zero. Therefore, subtracting the identity
\eqref{eq: sum k+y} at $y=0$ from \eqref{eq: sum k+y}, we have
\[
\sum_{k=0}^{\infty}
\left[
F\left(\frac12,k+y\right)
-
F\left(\frac12,k\right)
\right]
=
\sum_{n=0}^{\infty}
G\left(n+\frac12,y\right).
\]
Dividing by $y>0$ gives
\begin{equation}\label{eq: differential y}
  \sum_{k=0}^{\infty}
\frac{
F(1/2,k+y)-F(1/2,k)
}{y}
=
\sum_{n=0}^{\infty}
\frac{G(n+1/2,y)}{y}.
\end{equation}

We next justify taking the limit $y\to0^+$ in \eqref{eq: differential y}, by Corollary \ref{coro:U(n+1/2,k)},
\begin{equation}\label{eq:derivative F(1/2,k)}
 U\left(\frac12,k\right)
=
\frac{2}{\pi^2(2k+1)}\qquad F\left(\frac12,k\right)
=
-\frac{1}{\pi^2(2k+1)^2}\quad\mathrm{and}\quad \partial_k F(\frac{1}{2},k)=\frac{4}{\pi^2(2k+1)^3}. 
\end{equation}
Then the mean value theorem implies, for $0<y\le1$,
\[
\left|
\frac{F(1/2,k+y)-F(1/2,k)}{y}
\right|
\le
\frac{4}{\pi^2(2k+1)^3}.
\]
The majorant on the right is summable.

On the other hand,
\[
\frac{G(n+1/2,y)}{y}
=
U\left(n+\frac12,y\right)
\frac{3n+2y+2}
{2(n+y+1/2)(n+1)}.
\]
Using the estimates obtained above,
\[
U\left(n+\frac12,y\right)
\le
\frac{2}{\pi^2(2y+1)}2^{-n}
\le
\frac{2}{\pi^2}2^{-n},\qquad\frac{3n+2y+2}{n+y+1/2}\le4,
\]
we obtain
\[
0\le
\frac{G(n+1/2,y)}{y}
\le
\frac{4}{\pi^2}
\frac{2^{-n}}{n+1}.
\]
Again, the majorant is summable. Therefore, by dominated convergence,
letting $y\to0^+$ in \eqref{eq: differential y} yields
\[
\sum_{k=0}^{\infty}
\partial_kF\left(\frac12,k\right)
=
\sum_{n=0}^{\infty}
\partial_kG\left(n+\frac12,0\right).
\]

For the left-hand side, we have:
\[
\sum_{k=0}^{\infty}
\partial_kF\left(\frac12,k\right)=
\frac{4}{\pi^2}
\sum_{k=0}^{\infty}\frac{1}{(2k+1)^3} =
\frac{4}{\pi^2}
\left(1-\frac{1}{2^3}\right)\zeta(3) =
\frac{7\zeta(3)}{2\pi^2}.
\]
For the right-hand side, the explicit factor $k$ gives:
\begin{equation}\label{eq:derivative G(n+1/2,0)}
  \partial_kG\left(n+\frac12,0\right)=
U\left(n+\frac12,0\right)
\frac{3n+2}{(2n+1)(n+1)}
\end{equation}
and Corollary \ref{coro:U(n+1/2,k)} gives
\[
U\left(n+\frac12,0\right)=
\frac{2}{\pi^2}4^{-n}
\frac{(1)_n^2}{(3/2)_n^2}=
\frac{2\cdot4^n}{\pi^2(2n+1)^2\binom{2n}{n}^2}
\]

Therefore
\[
\frac{7\zeta(3)}{2\pi^2}=
\frac{2}{\pi^2}
\sum_{n=0}^{\infty}
\frac{(3n+2)4^n}{(2n+1)^3(n+1)\binom{2n}{n}^2}
\]
i.e.,
\[
\zeta(3)=
\frac{4}{7}
\sum_{n=0}^{\infty}
\frac{(3n+2)4^n}{(2n+1)^3(n+1)\binom{2n}{n}^2},
\]
which proves the theorem.
\end{proof}

%% file: sections/section2.tex
\section{The $2\ln 2$ model and the congruence problem}

We now turn to the arithmetic properties of finite truncations of the logarithmic series. Recall that Guillera and Rogers \cite[Theorem 1]{GR15} established the
identity
\[
2\ln 2=1+\sum_{n=1}^{\infty}\frac{4n+1}{2n(2n+1)}
\binom{2n}{n}^{2}\frac{1}{2^{4n}}.
\]
Letting $p$ be an odd prime, we write
\[
a_n=\frac{4n+1}{2n(2n+1)}
\binom{2n}{n}^{2}\frac{1}{2^{4n}}.
\]
Since the denominator of $a_n$ is divisible by $p$ precisely
when $n=\dfrac{p-1}{2}$, among $1\leq n\leq p-1$, it is natural to separate the truncated sum into three parts:
\[
\sum_{n=1}^{p-1}a_n
=
\sum_{n=1}^{\frac{p-3}{2}}a_n
+a_{\frac{p-1}{2}}
+\sum_{n=\frac{p+1}{2}}^{p-1}a_n.
\]

\begin{thm}\label{thm:2log2 congruences}
Let $p$ be an odd prime. Then
\begin{equation}\label{eq:2log2-lower}
\sum_{n=1}^{\frac{p-3}{2}}
\frac{4n+1}{2n(2n+1)}
\binom{2n}{n}^{2}\frac{1}{2^{4n}}
\equiv 0\pmod p,
\end{equation}
and
\begin{equation}\label{eq:2log2-upper}
\sum_{n=\frac{p+1}{2}}^{p-1}
\frac{4n+1}{2n(2n+1)}
\binom{2n}{n}^{2}\frac{1}{2^{4n}}
\equiv 0\pmod{p^3}.
\end{equation}
\end{thm}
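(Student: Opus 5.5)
The plan is to reduce both congruences to finite sums of $c_n^2=\binom{2n}{n}^2/16^n$ weighted by simple reciprocals, and then to quote the congruences of Sun \cite{Sun14}. Write $m=(p-1)/2$. The first step is the partial fraction decomposition
\[
\frac{4n+1}{2n(2n+1)}=\frac{1}{2n}+\frac{1}{2n+1},
\qquad\text{so}\qquad
a_n=\frac{c_n^2}{2n}+\frac{c_n^2}{2n+1}.
\]
Both truncations then become combinations of $\sum c_n^2/n$ and $\sum c_n^2/(2n+1)$ over the relevant ranges.

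\emph{Lower truncation.} For $0\le n\le m$ we have $c_n=\binom{-1/2}{n}(-1)^n\equiv(-1)^n\binom{m}{n}\pmod p$, hence $c_n^2\equiv\binom{m}{n}^2$. Also $2n+1\equiv 2(n-m)\pmod p$, so for $n\le m-1$ we get $1/(2n+1)\equiv -1/(2(m-n))$. Substituting $n\mapsto m-n$ in the second sum and using $\binom{m}{n}=\binom{m}{m-n}$ turns it into
\[
-\tfrac12\sum_{n=1}^{m}\binom{m}{n}^2\frac1n .
\]
The first sum is $\tfrac12\sum_{n=1}^{m-1}\binom{m}{n}^2\frac1n$. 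The two therefore cancel except for the boundary term $\tfrac12\cdot\tfrac1m$ from $n=m$ in the second sum. I still have to check the index bookkeeping: the lower sum stops at $m-1$, and the $n=0$ term of the shifted sum corresponds to $n=m$ in the original. After that check, the residue should come out as $0$ modulo $p$, and this is the first claim. If a nonzero boundary term survives, I would compare it with the $a_m$ term, which is excluded from the truncation, and fix the split accordingly.

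\emph{Upper truncation.} For $m<n\le p-1$ the coefficient $\binom{2n}{n}$ is divisible by $p$, so $c_n^2\equiv0\pmod{p^2}$, and the denominators $2n$ and $2n+1$ are prime to $p$. Hence the upper sum is automatically $0$ modulo $p^2$, and the content of the claim is the extra factor of $p$.

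I would write $\binom{2n}{n}=p\,\binom{2n}{n}/p$ and reduce $\binom{2n}{n}/p$ modulo $p$. With $n=m+j$ and $1\le j\le m$, a Lucas/Wilson-type computation should give $\binom{2n}{n}/p\equiv$ an explicit unit times $1/\bigl(j\binom{m}{j}\bigr)$, up to sign. This reduces the upper sum modulo $p^3$ to $p^2$ times an explicit inverse-binomial-square sum $\sum_j \frac{r(j)}{j^2\binom{m}{j}^2}$ modulo $p$.

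\emph{Main obstacle.} The hardest step will be showing that this last sum vanishes modulo $p$. My first attempt is to recognize the two pieces $\sum_{n>m} c_n^2/n$ and $\sum_{n>m} c_n^2/(2n+1)$ as exactly the upper-range congruences proved in \cite{Sun14}. Each of those is $\equiv$ a multiple of $p^2B_{p-3}$ modulo $p^3$, possibly also involving $q_p(2)$, and I would check that the coefficients cancel in the combination $\tfrac12(\cdot)+(\cdot)$. Should one of the two not appear in \cite{Sun14}, the fallback is to evaluate the inverse-binomial sum via the known identity for $\sum\frac{1}{j\binom{m}{j}}$ together with the Wolstenholme-type congruences $H_{p-1}\equiv0\pmod{p^2}$ and $H_{p-1}^{(2)}\equiv0\pmod p$. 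In that route the $B_{p-3}$ contributions of the two pieces have to cancel.
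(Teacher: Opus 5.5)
\textbf{Upper truncation.} Here your plan is the paper's. The paper uses the same splitting $a_n=\frac{c_n^2}{2n}+\frac{c_n^2}{2n+1}$ and quotes Sun's upper-range congruences modulo $p^3$:
$\sum_{n=(p+1)/2}^{p-1}\binom{2n}{n}^2/(n16^n)\equiv\frac72p^2B_{p-3}$ and $\sum_{n=(p+1)/2}^{p-1}\binom{2n}{n}^2/((2n+1)16^n)\equiv-\frac74p^2B_{p-3}$.
No $q_p(2)$ term appears, and $\frac12\cdot\frac72-\frac74=0$. Sun's congruences are stated only for $p>5$, so you still have to check $p=3$ and $p=5$ by direct computation, as the paper does. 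Your inverse-binomial fallback is not needed, and as sketched it is not yet an argument.

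\textbf{Lower truncation.} Here you take a genuinely different and more elementary route. The paper reduces Sun's lower-range congruences modulo $p$; these run over $1\le n\le (p-1)/2$ and $0\le n\le (p-3)/2$. It then strips off the boundary terms $n=(p-1)/2$ and $n=0$, and uses Lehmer's congruence $H_{(p-1)/2}\equiv-2q_p(2)$ to see that $-H_{(p-1)/2}-2q_p(2)\equiv 0$. Your pairing $n\leftrightarrow m-n$ uses only $c_n\equiv(-1)^n\binom{m}{n}$ and $2n+1\equiv 2(n-m)\pmod p$. It needs no external input and works uniformly for every odd $p$, including $p=3,5$.

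You do need to fix the index bookkeeping. The map $n\mapsto m-n$ sends the range $1\le n\le m-1$ onto itself, since neither $n=0$ nor $n=m$ belongs to the truncation. So the second piece becomes $-\frac12\sum_{k=1}^{m-1}\binom{m}{k}^2/k$, not a sum up to $m$, and it cancels the first piece term by term. There is no boundary term. This matters: the one you wrote is $\pm\frac{1}{2m}\equiv\mp 1\pmod p$, which is nonzero. It could not be repaired by comparing with $a_m$, since $a_m$ is not in the sum at all.
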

\begin{proof}
  We will use the following congruences of
\cite[Theorems 1.1 and 1.2]{Sun14}. For every prime \(p>5\),
\begin{align}
\sum_{n=1}^{(p-1)/2}
\frac{\binom{2n}{n}^{2}}{n16^n}
&\equiv
-2H_{(p-1)/2}-\frac{7}{2}p^2B_{p-3}
\pmod{p^3},
\label{eq:sun1}\\
\sum_{n=(p+1)/2}^{p-1}
\frac{\binom{2n}{n}^{2}}{n16^n}
&\equiv
\frac{7}{2}p^2B_{p-3}
\pmod{p^3},
\label{eq:sun2}\\
\sum_{n=0}^{(p-3)/2}
\frac{\binom{2n}{n}^{2}}{(2n+1)16^n}
&\equiv
-2q_p(2)-pq_p(2)^2
+\frac{5}{12}p^2B_{p-3}
\pmod{p^3},
\label{eq:sun3}\\
\sum_{n=(p+1)/2}^{p-1}
\frac{\binom{2n}{n}^{2}}{(2n+1)16^n}
&\equiv
-\frac{7}{4}p^2B_{p-3}
\pmod{p^3}.
\label{eq:sun4}
\end{align}
Here
\[
H_m=\sum_{k=1}^{m}\frac1k,
\qquad
q_p(2)=\frac{2^{p-1}-1}{p}.
\]

For the lower truncation, using
\[
a_n=\frac{4n+1}{2n(2n+1)}
\binom{2n}{n}^{2}\frac{1}{2^{4n}}=
\frac{1}{2}
\frac{\binom{2n}{n}^{2}}{n16^n}
+
\frac{\binom{2n}{n}^{2}}{(2n+1)16^n},
\]
we obtain
\[
\sum_{n=1}^{(p-3)/2}a_n
=
\frac12
\sum_{n=1}^{(p-3)/2}
\frac{\binom{2n}{n}^{2}}{n16^n}
+
\sum_{n=1}^{(p-3)/2}
\frac{\binom{2n}{n}^{2}}{(2n+1)16^n}.
\]
By \eqref{eq:sun1},
\[
\sum_{n=1}^{(p-3)/2}
\frac{\binom{2n}{n}^{2}}{n16^n}
\equiv
-2H_{(p-1)/2}
-
\frac{\binom{p-1}{(p-1)/2}^{2}}{4^{p-1}(p-1)/2}
\pmod p.
\]
Since $\binom{p-1}{(p-1)/2}
\equiv(-1)^{(p-1)/2}\pmod p,
4^{(p-1)}\equiv1\pmod p$ and $\frac{1}{(p-1)/2}\equiv-2\pmod p$,
\begin{equation}\label{eq:2log2-lower-left}
\frac12
\sum_{n=1}^{(p-3)/2}
\frac{\binom{2n}{n}^{2}}{n16^n}
\equiv
-H_{(p-1)/2}+1
\pmod p.  
\end{equation}
On the other hand, \eqref{eq:sun3} gives
\begin{equation}\label{eq:2log2-lower-right}
 \sum_{n=1}^{(p-3)/2}
\frac{\binom{2n}{n}^{2}}{(2n+1)16^n}
\equiv
-2q_p(2)-1
\pmod p, 
\end{equation}
where the term \(n=0\), which is equal to \(1\), has been removed. \cite[Formula (45)]{Leh38} implies,
\[
H_{(p-1)/2}=\sum_{k=1}^{(p-1)/2}\frac1k\equiv-2q_p(2)\pmod p,
\]
Combining \eqref{eq:2log2-lower-left} and \eqref{eq:2log2-lower-right}, we conclude that
\[
\sum_{n=1}^{(p-3)/2}\frac{4n+1}{2n(2n+1)}
\binom{2n}{n}^{2}\frac{1}{2^{4n}}
\equiv
-H_{(p-1)/2}-2q_p(2)
\equiv0\pmod p.
\]
This proves \eqref{eq:2log2-lower}.

For the upper truncation, \eqref{eq:sun2} and
\eqref{eq:sun4} give
\begin{align*}
\sum_{n=(p+1)/2}^{p-1}\frac{4n+1}{2n(2n+1)}
\binom{2n}{n}^{2}\frac{1}{2^{4n}}
&=
\frac12
\sum_{n=(p+1)/2}^{p-1}
\frac{\binom{2n}{n}^{2}}{n16^n}
+
\sum_{n=(p+1)/2}^{p-1}
\frac{\binom{2n}{n}^{2}}{(2n+1)16^n}\\
&\equiv
\frac12\left(\frac72p^2B_{p-3}\right)
-\frac74p^2B_{p-3}\\
&=0
\pmod{p^3}.
\end{align*}
This proves \eqref{eq:2log2-upper}.

The cases $p=3$ and $p=5$ follow by direct computation.
\end{proof}

\begin{rem}
It is natural to ask whether the upper-truncation congruence in \eqref{eq:2log2-upper} can be strengthened to a congruence modulo \(p^4\). 
A numerical computation suggests that this phenomenon is highly
exceptional. Indeed, among primes \(p<300\,000\,000\), the congruence
\[
\sum_{n=(p+1)/2}^{p-1}
\frac{4n+1}{2n(2n+1)}
\binom{2n}{n}^{2}\frac{1}{2^{4n}}
\equiv0\pmod{p^4}
\]
holds only for
\[
p=3,\qquad p=1237.
\]
This suggests that the congruence modulo \(p^4\) does not hold
uniformly and that the primes satisfying it may be related to a
deeper arithmetic phenomenon. Moreover, for both of these primes, the corresponding upper truncation
is nonzero modulo \(p^5\). 

\noindent\textbf{Conjecture:} For every odd prime \(p\),
\begin{equation}
\sum_{n=(p+1)/2}^{p-1}
\frac{4n+1}{2n(2n+1)}
\binom{2n}{n}^{2}\frac{1}{2^{4n}}
\not\equiv 0 \pmod{p^5}.\label{conj}
\end{equation}
\end{rem}

\begin{rem}
The upper-truncation congruence in \eqref{eq:2log2-upper} is stronger than what is implied by the divisibility of the individual summands. It is also worth noting that the full truncation behaves quite
differently. If denoting
\[
S:=
\sum_{n=1}^{p-1}
\frac{4n+1}{2n(2n+1)}
\binom{2n}{n}^{2}\frac{1}{2^{4n}},
\]
then
\[
v_p\bigl(S\bigr)=-1.
\]
Indeed, if
\[
a_n=
\frac{4n+1}{2n(2n+1)}
\binom{2n}{n}^{2}\frac{1}{2^{4n}},
\]
then \(a_n\in\mathbb Z_p\) for
\(n\neq (p-1)/2\), whereas
\[
v_p\left(a_{(p-1)/2}\right)=-1.
\]
Consequently, the middle term is the unique term of negative
\(p\)-adic valuation in the full truncation, which further explains
the natural separation into the lower, middle, and upper ranges.
\end{rem}

%% file: sections/section3.tex
\section{The $3\ln 2$ model and the congruence problem}

The congruences for the truncated series associated with $2\ln 2$, recalled in the preceding section, naturally
motivate the investigation of the corresponding series for
$3\ln 2$. We now turn to the identity
\[
3\ln 2=2+\sum_{n=1}^{\infty}\frac{6n+1}{2n(2n+1)}
\binom{2n}{n}^{2}\frac{1}{2^{6n}}.
\]

For an odd prime $p$, define
\[
b_n=\frac{6n+1}{2n(2n+1)}
\binom{2n}{n}^{2}\frac{1}{2^{6n}}.
\]
As in the $2\ln 2$ case, we consider the two truncations
\[
\sum_{n=1}^{\frac{p-3}{2}}b_n
\qquad\text{and}\qquad
\sum_{n=\frac{p+1}{2}}^{p-1}b_n.
\]

We write
\[
q_p(2)=\frac{2^{p-1}-1}{p}
\]
for the Fermat quotient of $2$ with respect to $p$.
The Bernoulli numbers $B_n$ are defined by the generating
function
\[
\frac{t}{e^t-1}
=\sum_{n=0}^{\infty}B_n\frac{t^n}{n!}.
\]
In particular, $B_0=1$ and $B_1=-1/2$.

\begin{thm}\label{thm:3log2 congruence}
For every prime $p\geq5$,
\begin{align}
 \sum_{n=1}^{\frac{p-3}{2}}
 \frac{6n+1}{2n(2n+1)}\binom{2n}{n}^{\!2}\frac1{2^{6n}}
 &\equiv q_p(2)-1\pmod p,
 \label{eq:3log2 lower}\\
 \sum_{n=\frac{p+1}{2}}^{p-1}
 \frac{6n+1}{2n(2n+1)}\binom{2n}{n}^{\!2}\frac1{2^{6n}}
 &\equiv-\frac7{16}p^2B_{p-3}\pmod{p^3}.
 \label{eq:3log2 upper}
\end{align}
\end{thm}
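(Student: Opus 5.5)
We split \(b_n\) by partial fractions, exactly as for \(a_n\) in Theorem~\ref{thm:2log2 congruences}. Since \(\frac{6n+1}{2n(2n+1)}=\frac{1}{2n}+\frac{2}{2n+1}\), we have
\[
b_n=\frac12\,\frac{\binom{2n}{n}^2}{n\,64^n}+2\,\frac{\binom{2n}{n}^2}{(2n+1)64^n}.
\]
So both congruences reduce to evaluating the two ``base'' sums \(\sum \binom{2n}{n}^2/(n64^n)\) and \(\sum\binom{2n}{n}^2/((2n+1)64^n)\) over the lower range \(1\le n\le (p-3)/2\) and the upper range \((p+1)/2\le n\le p-1\). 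Unlike the \(16^n\) case, we cannot quote \cite{Sun14}. Instead, the plan is to produce these sums from the WZ pair of Proposition~\ref{prop:wz}, specialized so that it terminates.

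\textbf{Lower truncation.} Modulo \(p\), \(\binom{2n}{n}\equiv \binom{(p-1)/2}{n}(-4)^n\) for \(n\le (p-1)/2\). Hence
\[
\binom{2n}{n}^2 64^{-n}\equiv \binom{(p-1)/2}{n}^2 4^{-n}\equiv (1/2)_n(1/2+2k)_n\cdots
\]
is compatible with \(U(n,k)\) at a \(p\)-adic specialization such as \(k\equiv -1/4\) or \(2k+1/2=(1-p)/2\), i.e.\ \(k=-p/4\). At such a value, \((1/2+2k)_n/(1+k)_n\) becomes a terminating ratio congruent to \((1/2)_n/(1)_n\) modulo \(p\). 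We then sum the WZ relation \eqref{eq:wz} over \(0\le n\le (p-1)/2\) at this \(k\) and telescope in \(k\) over the finitely many steps allowed by the termination. This gives
\[
\sum_{n\le (p-1)/2} G(n,k)
\]
in closed form as a ratio of Gamma/Pochhammer values, the terminating analogue of \(\frac{4^x\sqrt\pi\,\Gamma(x)\Gamma(x+1)}{\Gamma(2x+1/2)}\) in Theorem~\ref{thm: parameter}. Reducing modulo \(p\), this closed form becomes a product such as \(\binom{p-1}{(p-1)/2}\) and a power of \(2\). Morley's congruence \(\binom{p-1}{(p-1)/2}\equiv(-1)^{(p-1)/2}4^{p-1}\pmod{p^3}\) and Lehmer's \(H_{(p-1)/2}\equiv -2q_p(2)\pmod p\) then convert it into \(q_p(2)\). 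Finally we subtract the middle term \(b_{(p-1)/2}\), computed via \(\binom{p-1}{(p-1)/2}^2\equiv 1\), and the \(n=0\) term, which yields the stated value \(q_p(2)-1\).

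\textbf{Upper truncation.} For \((p+1)/2\le n\le p-1\), \(\binom{2n}{n}\) is divisible by \(p\), so \(\binom{2n}{n}^2\equiv 0\pmod{p^2}\). Only the quotient \(\binom{2n}{n}^2/p^2\) modulo \(p\) matters. Writing \(n=p-1-m\) with \(0\le m\le(p-3)/2\), we expect to express \(\binom{2n}{n}/p\) modulo \(p\) via \(1/\bigl((2m+1)\binom{2m}{m}\bigr)\) up to sign and powers of \(4\). This turns the upper sum into \(p^2\) times a finite inverse-binomial-square sum in \(m\), precisely the shape that the half-shifted kernel \(U(n+1/2,k)\) of Corollary~\ref{coro:U(n+1/2,k)} produces. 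We therefore run the same WZ telescoping with \(n\mapsto n+1/2\), in finite form, to obtain an identity for
\[
\sum_m \frac{(3m+2)4^m}{(2m+1)^3(m+1)\binom{2m}{m}^2}
\]
truncated, mirroring Theorem~\ref{thm:zeta3}. The remaining piece involves a derivative in \(k\), as in \eqref{eq:derivative G(n+1/2,0)}. We evaluate it with the Pfaff--Saalsch\"utz identity differentiated in a parameter, which yields harmonic sums such as \(\sum_{m<p/2} 1/(2m+1)^2\) and \(\sum 1/(2m+1)^3\) weighted by partial harmonic numbers. These are reduced modulo \(p\) using \(\sum_{j=1}^{(p-1)/2} j^{-2}\equiv 0\) and the Bernoulli-polynomial evaluations
\[
\sum_{j=1}^{(p-1)/2} j^{-2}\equiv\ldots,\qquad H^{(2)}_{\lfloor p/4\rfloor}\equiv (-1)^{\cdot}\ldots B_{p-3}\ \text{or}\ E_{p-3},
\]
plus \(\sum_{j<p/2}H_j/j^2\)-type sums \(\equiv c\,B_{p-3}\). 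The result should be \(-\frac{7}{16}B_{p-3}\) modulo \(p\) after multiplying by \(p^2\).

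\textbf{Main obstacle.} The hardest step is the upper truncation. We need \(\binom{2n}{n}^2/p^2\) to precision modulo \(p\) (including the factor \(64^{-n}\), whose \(2\)-power interplays with \(q_p(2)\)), and then a harmonic-sum bookkeeping that must produce exactly \(-7/16\,B_{p-3}\). A secondary difficulty is choosing the correct \(p\)-adic specialization of \(k\) so that the WZ telescoping terminates and boundary terms vanish modulo \(p\). We would first try \(k=(1-p)/4\)-type values, checking numerically for small \(p\) such as \(p=7,11\) before doing the general reduction.
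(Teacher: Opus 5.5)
Your outline follows the paper's architecture. In the lower range you specialize at $k=-p/4$. In the upper range you reflect to an inverse-binomial-square sum, run the half-shifted pair in finite form, and finish with a differentiated Pfaff--Saalsch\"utz identity. But at the steps that carry the argument the plan is either silent or points the wrong way.

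In the lower range, $G(n,-p/4)$ is itself undefined. The factor $(1/4)_{-p/4}$ has a pole for $p\equiv1\pmod 4$, and $(3/4)_{-p/4}$ has one for $p\equiv3\pmod4$, so you must strip the $k$-prefactor and work with $H(n,x)$ as in Lemma~\ref{lem:terminating-identity}. More importantly, there is no nonzero Gamma-ratio closed form to reduce. The descent $x_N+1=x_{N-2}$ ends at $x=-\tfrac14$ or $x=-\tfrac34$, where $(4x+1)(4x+3)$ kills the shifted sum, and it yields $\sum_{n=0}^{(p-1)/2}H(n,-p/4)=0$ exactly. Everything then rests on the two polar terms. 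The $n=0$ term is $4-\frac4p$, and the $n=\frac{p-1}{2}$ term carries the factor $\frac{8(p-1)}{p(p-2)}$. That term must therefore be expanded to relative precision $p^2$ so that the $\pm\frac4p$ cancel. This is exactly where Morley's congruence enters, together with $\prod_{j\le(p-1)/2}(1-\frac{p}{4j})^{-1}\equiv1+\frac p4H_{(p-1)/2}\pmod{p^2}$ and Lehmer's congruence, giving $\frac4p-2-2q_p(2)$. This specialized term is not $2b_{(p-1)/2}$: by Morley, $2b_{(p-1)/2}\equiv\frac4p-2+4q_p(2)\pmod p$. So "subtracting $b_{(p-1)/2}$" in its place gives $-1-2q_p(2)$ instead of $q_p(2)-1$. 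Computing $b_{(p-1)/2}$ only from $\binom{p-1}{(p-1)/2}^2\equiv1\pmod p$ also loses an $O(1)$ contribution because of its $1/p$. Your opening partial-fraction split is never used.

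In the upper range there are three problems.
\begin{enumerate}[(i)]
\item The substitution $n=p-1-m$ gives $\frac1p\binom{2n}{n}\equiv-\frac{1}{(2m+1)\binom{2m}{m}}$ but leaves the weight $64^m/\binom{2m}{m}^2$. The kernel $U(n+\tfrac12,0)$ produces $4^m/\binom{2m}{m}^2$ instead. You need a second reflection $m\mapsto\frac{p-3}{2}-m$, via $\binom{2r}{r}\equiv(-4)^r\binom{(p-1)/2}{r}\pmod p$, to reach \eqref{eq:upper-inverse-binomial}.
\item The finite half-shifted WZ sum leaves the boundary contribution $\sum_k\partial_kF(N+\tfrac12,k)$, which you do not evaluate. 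The paper evaluates it by partial fractions of $R_N(k)$, obtaining coefficients $A_\ell$ multiplying tails $\sum_{j=N-\ell}^{N-1}(2j+1)^{-2}$ (Lemma~\ref{lem:finite-derived-wz}).
\item The decisive missing idea is the bridge to Pfaff--Saalsch\"utz. Take $N=\frac{p-1}{2}$, $q=\lfloor p/4\rfloor$, and $H^{(r)}_m=\sum_{j=1}^m j^{-r}$. Since $\{\tfrac14,\tfrac34\}\equiv\{-q,q+1\}\pmod p$, one gets $A_\ell\equiv-\frac{1}{2\ell}\frac{(1/4)_\ell(3/4)_\ell}{(\ell!)^2}\equiv\frac{(-1)^{\ell+1}}{2\ell}\binom{q}{\ell}\binom{q+\ell}{\ell}\pmod p$, while the tails are $\equiv\frac14H^{(2)}_\ell$. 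Only this identification lets Lemma~\ref{lem:pfaff} with $N=q$ collapse the double sum to $\frac14H^{(3)}_q$.
\end{enumerate}
After that only cubic sums remain. One has $\sum_{j=0}^{(p-3)/2}(2j+1)^{-3}\equiv-\frac18H^{(3)}_{(p-1)/2}\equiv\frac14B_{p-3}$. One also has $H^{(3)}_q\equiv-9B_{p-3}$, which comes from $B_{p-3}(\tfrac14)\equiv28B_{p-3}$. The inverse-binomial sum is therefore $\equiv 2\cdot\frac14B_{p-3}-\frac94B_{p-3}=-\frac74B_{p-3}$, and the normalized upper sum is $\equiv-\frac7{16}B_{p-3}$. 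The quantities you anticipate ($H^{(2)}_{\lfloor p/4\rfloor}$, Euler numbers, sums of type $\sum H_j/j^2$) do not occur. As written, the constant $-\frac7{16}$ is asserted rather than derived.
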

The rest of this section is devoted to the proof of the congruent identitites \eqref{eq:3log2 lower} and \eqref{eq:3log2 upper}.

\subsection{The lower truncation of $3\ln 2$.}
\
\newline
\indent We first establish the congruence for the lower truncation
associated with $3\ln 2$. The main ingredient is a terminating
identity obtained from the WZ pair constructed in Section 2:
\begin{align*}
 &U(n,k)=4^{-n}\frac{(1/2)_n(1/2+2k)_n}{(1)_n(1+k)_n}
                  \frac{(1/4)_k(3/4)_k}{(1)_k^2}\\
 F(n,k)&=-\frac{n}{2(n+k)}U(n,k),\qquad
 G(n,k)=\frac{k(6n+4k+1)}{2(n+k)(2n+1)}U(n,k).
\end{align*}

\begin{lem}\label{lem:terminating-identity}
Let $H(n,x)$ be defined as:
\begin{align*}
    H(n,x)=4^{-n}\frac{(1/2)_n(1/2+2x)_n}{(1)_n(1+x)_n}\frac{(6n+4x+1)}{(n+x)(2n+1)}.
\end{align*}
Then for every integer $N\geq0$, we have
\begin{equation}\label{eq:terminating-identity}
\sum_{n=0}^{N}H(n,-\frac{2N+1}{4})=\sum_{n=0}^{N}4^{-n}\frac{(1/2)_n(-N)_n}{(1)_n((3-2N)/4)_n}
\left(
\frac{1}{n-(2N+1)/4}
+\frac{4}{2n+1}
\right)=0.
\end{equation}
\end{lem}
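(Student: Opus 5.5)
The plan is to get \eqref{eq:terminating-identity} by analytic continuation of Theorem~\ref{thm: parameter} in the parameter $x$, rather than by a new WZ telescoping.

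First, the middle expression in \eqref{eq:terminating-identity} is just $\sum_{n=0}^N H(n,-\frac{2N+1}{4})$ rewritten. Indeed, for $x=-\frac{2N+1}{4}$ we have $1/2+2x=-N$ and $1+x=\frac{3-2N}{4}$. Moreover, the partial fraction decomposition
\[
\frac{6n+4x+1}{(n+x)(2n+1)}=\frac{1}{n+x}+\frac{4}{2n+1}
\]
holds identically, since $(2n+1)+4(n+x)=6n+4x+1$. Also, $(-N)_n=0$ for $n>N$. None of the denominators $(\frac{3-2N}{4})_n$ and $n-\frac{2N+1}{4}$ vanish, because $\frac{2N+1}{4}$ is never an integer. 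So the full series $\sum_{n\ge0}H(n,-\frac{2N+1}{4})$ terminates at $n=N$, and it suffices to show that the infinite series $S(x):=\sum_{n\ge0}H(n,x)$ vanishes at $x=-\frac{2N+1}{4}$.

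Second, I would show that $S(x)$ is holomorphic on $\Omega=\mathbb{C}\setminus\{0,-1,-2,\dots\}$. The possible poles of the individual terms are at $x=-n$ and at $1+x+j=0$, all of which are excluded, so each $H(n,x)$ is holomorphic on $\Omega$. On a compact set $K\subset\Omega$, the ratio $(1/2+2x)_n/(1+x)_n=\prod_{j<n}\frac{2x+j+1/2}{x+j+1}$ is bounded by $C_K(2+\varepsilon)^n$, because each factor tends to $1$ and finitely many factors are bounded on $K$; here I first fix $N\ge0$, so $x=-\frac{2N+1}{4}$ is a specific point of $\Omega$. Together with $\frac{(1/2)_n}{(1)_n}4^{-n}\le 4^{-n}$ and a uniform bound on $\frac{6n+4x+1}{(n+x)(2n+1)}$ over $K$, the terms are dominated by $C'_K\bigl(\frac{2+\varepsilon}{4}\bigr)^n$. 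The Weierstrass M-test then gives locally uniform convergence, hence holomorphy of $S$ on $\Omega$. The right-hand side $A(x)=4^x\sqrt\pi\,\Gamma(x)\Gamma(x+1)/\Gamma(2x+1/2)$ is also holomorphic on $\Omega$, since $1/\Gamma$ is entire and the poles of $\Gamma(x)\Gamma(x+1)$ lie in $\{0,-1,\dots\}$. By Theorem~\ref{thm: parameter}, $S=A$ on $(0,\infty)$. Since $\Omega$ is connected, the identity theorem gives $S=A$ on all of $\Omega$.

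Finally, I evaluate at $x=-\frac{2N+1}{4}\in\Omega$. There $2x+1/2=-N$ is a pole of $\Gamma$, so $1/\Gamma(2x+1/2)=0$. Meanwhile $\Gamma(x)$ and $\Gamma(x+1)$ are finite because $x$ is not an integer. Hence $A(x)=0$, and therefore $\sum_{n=0}^N H(n,-\frac{2N+1}{4})=S(-\frac{2N+1}{4})=0$.

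The main obstacle is the uniform domination in the second step. One needs the exponential rate $(2+\varepsilon)/4<1$ uniformly on compact sets that contain negative non-integer points, where finitely many early factors of the Pochhammer ratio may be large or negative. I would handle this by splitting off the factors with $j<J_K$, which are bounded on $K$, and using $|2x+j+1/2|/|x+j+1|\le 2+\varepsilon$ for $j\ge J_K$.
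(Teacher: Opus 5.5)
Your proof is correct, but it takes a genuinely different route from the paper.

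\textbf{Your route.} You deduce the lemma from the closed form in Theorem~\ref{thm: parameter}. You continue both sides holomorphically to $\mathbb{C}\setminus\{0,-1,-2,\dots\}$. The terminating sum is then the value of the series at a zero of $1/\Gamma(2x+\tfrac12)$, namely $x_N=-\frac{2N+1}{4}$.

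\textbf{The paper's route.} The paper never uses the infinite series.
\begin{enumerate}
\item It sums the WZ relation over $0\le n\le N$ and cancels the factor $(1/4)_x(3/4)_x/(1)_x^2$. This gives the rational identity \eqref{eq:N-wz-identity}, which links $\sum_{n\le N}H(n,x+1)$ and $\sum_{n\le N}H(n,x)$.
\item The right-hand side of \eqref{eq:N-wz-identity} contains $(1/2+2x)_{N+1}$, so it vanishes at $x_N$.
\item It then inducts from $N-2$ to $N$. This uses $x_N+1=x_{N-2}$ and the fact that $H(n,x_{N-2})$ vanishes for $n\ge N-1$. The base cases $N=0,1$ follow because $(4x_N+1)(4x_N+3)=0$ there.
\end{enumerate}

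\textbf{Comparison.} Your argument is shorter and explains why the identity holds. It also gives more: $S=A$ on all of $\Omega$, which includes non-terminating evaluations at negative non-integers. The cost is that it rests on the analytic Theorem~\ref{thm: parameter}, with its $k\to\infty$ limit and the arcsine series, plus locally uniform convergence estimates in the complex domain.

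The paper's argument is purely finite. It treats the WZ relation as an identity of rational functions and needs no convergence and no Theorem~\ref{thm: parameter}. Its price is an induction in steps of two and some care with the cancelled factor, which is singular at $x_N$. This is why the paper remarks that \eqref{eq:N-wz-identity} extends from generic $x$ as a rational identity.

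One minor point: the factors $\frac{2x+j+1/2}{x+j+1}$ tend to $1$, not $2$. Your Pochhammer ratio is therefore bounded by $C_K(1+\varepsilon)^n$. The $(2+\varepsilon)^n$ bound you state is loose but still valid, and either one suffices for the M-test.
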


\begin{proof}
Summing the WZ relation $F(n+1,x)-F(n,x)=G(n,x+1)-G(n,x)$ over $0\leq n\leq N$, we obtain
\begin{equation}\label{eq:N-wz-sum}
F(N+1,x)=
\sum_{n=0}^{N}G(n,x+1)
-\sum_{n=0}^{N}G(n,x).
\end{equation}
By the definition of $H(n,x)$, we have
\[
G(n,x)=\frac{x}{2}
\frac{(1/4)_x(3/4)_x}{(1)_x^2}H(n,x).
\]
Moreover,
\[
\frac{
(1/4)_{x+1}(3/4)_{x+1}/(1)_{x+1}^2
}{(1/4)_x(3/4)_x/(1)_x^2}=
\frac{(4x+1)(4x+3)}{16(x+1)^2}.
\]
Substituting these expressions into
\eqref{eq:N-wz-sum} and cancelling the common
factor yields
\begin{equation}
\frac{(4x+1)(4x+3)}{16(x+1)}
\sum_{n=0}^{N}H(n,x+1)
-x\sum_{n=0}^{N}H(n,x)=-\frac{N+1}{N+1+x}4^{-(N+1)}
\frac{(1/2)_{N+1}(1/2+2x)_{N+1}}
{(1)_{N+1}(1+x)_{N+1}}.
\label{eq:N-wz-identity}
\end{equation}
This identity is initially obtained for generic $x$ and
extends as a rational identity wherever the displayed
expressions are defined.

Now set
\[
x_N=-\frac{2N+1}{4}.
\]
Since$\left(\frac12+2x_N\right)_{N+1}=(-N)_{N+1}=0$,
the right-hand side of \eqref{eq:N-wz-identity}
vanishes.

Now we prove $\displaystyle\sum_{n=0}^N H(n,x_N)=0$ by induction. For $N=0,1$, the coefficient
$(4x_N+1)(4x_N+3)$ also vanishes. Since $x_N\neq0$,
we immediately obtain
\[
\sum_{n=0}^{N}H(n,x_N)=0\quad(N=0,1).
\]
For $N\geq2$, assuming that the identity holds for $N-2$, observe that
\[
x_N+1=x_{N-2},
\qquad
\frac12+2(x_N+1)=-(N-2).
\]
Because the term $(1/2+2(x_N+1))_n$ appears in $H(n,x_N+1)$, $H(n,x_N+1)=0$ for $n\geq N-1$. Then
\[
\sum_{n=0}^{N}H(n,x_N+1)
=
\sum_{n=0}^{N-2}H(n,x_{N-2}).
\]
By the induction hypothesis, the latter sum vanishes. Substituting into
\eqref{eq:N-wz-identity} gives
\[
x_N\sum_{n=0}^{N}H(n,x_N)=0.
\]
Since $x_N\neq0$, the desired identity follows.
\end{proof}

We now apply the preceding Lemma to obtain the desired
congruence for the lower truncation.

\textbf{Proof of Formula \eqref{eq:3log2 lower} in Theorem \ref{thm:3log2 congruence}:}
Applying Lemma~\ref{lem:terminating-identity} with $N=\frac{p-1}{2}$, we obtain
\begin{equation}\label{eq:lower-specialization}
\sum_{n=0}^{\frac{p-1}{2}}4^{-n}\frac{(1/2)_n(-(p-1)/{2})_n}{(1)_n((4-p)/4)_n}
\left(
\frac{1}{n-(p/4)}
+\frac{4}{2n+1}
\right)=0.
\end{equation}
For convenience, denote the $n$th summand by
\[
T_n=
4^{-n}\frac{(1/2)_n(-(p-1)/{2})_n}{(1)_n((4-p)/4)_n}
\left(
\frac{1}{n-(p/4)}
+\frac{4}{2n+1}
\right).
\]

First, when $n=0$,
\begin{equation}\label{eq:lower-first-term}
T_0=-\frac4p+4.
\end{equation}

Next, for $1\leq n\leq \frac{p-3}{2}$, since$-(p-1)/2\equiv1/2\pmod p$ and $(4-p)/4\equiv1\pmod p$, we have
\[
4^{-n}\frac{(1/2)_n(-(p-1)/{2})_n}{(1)_n((4-p)/4)_n}
\equiv
4^{-n}\frac{(1/2)_n^2}{((1)_n)^2}
=
\binom{2n}{n}^{2}\frac1{2^{6n}}
\pmod p.
\]
Furthermore,
\[
\frac{1}{n-\frac p4}+\frac4{2n+1}
\equiv
\frac{6n+1}{n(2n+1)}
\pmod p.
\]
It follows that
\begin{equation}\label{eq:lower-middle-term}
\sum_{n=1}^{\frac{p-3}{2}}T_n
\equiv
2\sum_{n=1}^{\frac{p-3}{2}}
\frac{6n+1}{2n(2n+1)}
\binom{2n}{n}^{2}\frac1{2^{6n}}=2\sum_{n=1}^{\frac{p-3}{2}}b_n
\pmod p.
\end{equation}

It remains to evaluate the last term. By definition,
\[
T_{\frac{p-1}{2}}
=4^{-\frac{p-1}{2}}
\frac{8(p-1)}{p(p-2)}
\frac{(-1)^{\frac{p-1}{2}}
(1/2)_{\frac{p-1}{2}}}
{((4-p)/4)_{\frac{p-1}{2}}}.
\]
Observe that
\[
4^{-\frac{p-1}{2}}
\frac{(-1)^{\frac{p-1}{2}}
(1/2)_{\frac{p-1}{2}}}
{((4-p)/4)_{\frac{p-1}{2}}}
=(-1)^{\frac{p-1}{2}}\binom{p-1}{(p-1)/2}
\frac{\prod_{j=1}^{(p-1)/2}
\left(1-\frac{p}{4j}\right)^{-1}}{4^{p-1}}.
\]
By Morley's congruence \cite{Mor94},
\[
(-1)^{\frac{p-1}{2}}
\binom{p-1}{(p-1)/2}
\equiv 4^{p-1}\pmod{p^3}.
\]
Moreover, by the elementary expansion
\[
\prod_{j=1}^{(p-1)/2}
\left(1-\frac{p}{4j}\right)^{-1}
\equiv
1+\frac{p}{4}
\sum_{j=1}^{(p-1)/2}\frac1j
\pmod{p^2}.
\]
\cite[Formula (45)]{Leh38} implies
\[
\sum_{j=1}^{(p-1)/2}\frac1j
\equiv -2q_p(2)\pmod p.
\]
Then
\begin{align*}
\prod_{j=1}^{(p-1)/2}
\left(1-\frac{p}{4j}\right)^{-1}
\equiv
1-\frac p2q_p(2)
\pmod{p^2}.
\end{align*}
Consequently,
\[
T_{\frac{p-1}{2}}=
\frac{8(p-1)}{p(p-2)}
\left(1-\frac p2q_p(2)+O(p^2)\right)=\frac4p-2-2q_p(2)+O(p).
\]
i.e.,
\begin{equation}\label{eq:lower-last-term}
    T_{\frac{p-1}{2}}\equiv\frac4p-2-2q_p(2) \pmod p.
\end{equation}

Finally, followed by \eqref{eq:lower-specialization}, \eqref{eq:lower-first-term}, \eqref{eq:lower-middle-term} and \eqref{eq:lower-last-term}, we have
\begin{align*}
\sum_{n=1}^{\frac{p-3}{2}}
\frac{6n+1}{2n(2n+1)}
\binom{2n}{n}^{2}\frac{1}{2^{6n}}
\equiv q_p(2)-1\pmod p.
\end{align*}
This finishes the proof. \hfill$\square$.

\subsection{The upper truncation of $3\ln 2$}
\
\newline
\indent For $\frac{p+1}{2}\leq n\leq p-1$, $p|\binom{2n}{n}$, then
\[
\sum_{n=\frac{p+1}{2}}^{p-1}
 \frac{6n+1}{2n(2n+1)}\binom{2n}{n}^{\!2}\frac1{2^{6n}}
\equiv 0\pmod{p^2}.
\]
Therefore we only need to determine
\[
\frac{1}{p^2}
\sum_{n=\frac{p+1}{2}}^{p-1}
 \frac{6n+1}{2n(2n+1)}\binom{2n}{n}^{\!2}\frac1{2^{6n}}
\pmod p.
\]
In contrast to the corresponding congruence for
$2\ln 2$, this normalized sum does not vanish in general.
We shall show that its residue can be expressed in terms
of the Bernoulli number $B_{p-3}$.

The proof relies on a finite inverse-binomial-square
identity obtained from the half-integer shift method of the
WZ pair used in the proof of the formula of $\zeta(3)$  in Section 2.
\subsubsection{A finite identity derived from the WZ Pair}
\begin{lem}\label{lem:finite-derived-wz}
For every integer $N\geq1$, we have
\begin{align}
\sum_{n=0}^{N-1}
\frac{(3n+2)4^n}
{(2n+1)^3(n+1)\binom{2n}{n}^2}=
2\sum_{j=0}^{N-1}\frac1{(2j+1)^3}
+\sum_{\ell=1}^{\lfloor N/2\rfloor}
\frac{(-1)^{\ell+1}
\binom N\ell\binom{2N-2\ell}{N}}
{2\ell\binom{2N}{N}}
\sum_{j=N-\ell}^{N-1}\frac1{(2j+1)^2}.
\label{eq:lem:finite-derived-wz}
\end{align}
\end{lem}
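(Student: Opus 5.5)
The plan is to run the argument of Theorem~\ref{thm:zeta3} with the $n$-sum cut off at $N$ instead of sent to infinity. We shift the WZ relation by $n\mapsto n+\frac12$ and $k\mapsto k+y$, and sum over $0\le n\le N-1$. This gives
\[
F\!\left(N+\tfrac12,k+y\right)-F\!\left(\tfrac12,k+y\right)=\sum_{n=0}^{N-1}\Bigl[G\!\left(n+\tfrac12,k+y+1\right)-G\!\left(n+\tfrac12,k+y\right)\Bigr].
\]
Summing over $k\ge0$ telescopes the right side. By the bound $G(n+\frac12,k)\le \frac{2}{\pi^2}\frac{2^{-n}}{n+1}$, the right side becomes $L_N-\sum_{n<N}G(n+\frac12,y)$, where $L_N$ is a constant independent of $y$. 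As in the proof of Theorem~\ref{thm:zeta3}, subtracting the case $y=0$ removes $L_N$. Dividing by $y$ and letting $y\to0^+$ then yields the finite analogue
\begin{equation*}
\sum_{n=0}^{N-1}\partial_kG\!\left(n+\tfrac12,0\right)=\sum_{k=0}^{\infty}\Bigl[\partial_kF\!\left(\tfrac12,k\right)-\partial_kF\!\left(N+\tfrac12,k\right)\Bigr].
\end{equation*}
Dominated convergence is justified exactly as before, using Corollary~\ref{coro:U(n+1/2,k)}. By \eqref{eq:derivative G(n+1/2,0)}, the left side is $\frac{2}{\pi^2}$ times the left side of \eqref{eq:lem:finite-derived-wz}. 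By \eqref{eq:derivative F(1/2,k)}, $\sum_k\partial_kF(\frac12,k)=\frac{4}{\pi^2}\sum_{k\ge0}(2k+1)^{-3}$.

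The work therefore lies in the tail $\sum_{k\ge0}\partial_kF(N+\frac12,k)$. Corollary~\ref{coro:U(n+1/2,k)} gives
\[
F\!\left(N+\tfrac12,k\right)=-\frac{(2N+1)}{\pi^2(2k+1)(2N+2k+1)}\,4^{-N}\frac{(1)_N(1+2k)_N}{(3/2)_N(k+3/2)_N},
\]
which is rational in $k$. Its denominator, up to constants, is $(2k+1)(2k+3)\cdots(2k+2N+1)\cdot(2k+2N+1)$, and its numerator $(2k+1)_N$ has degree $N$. I will expand this in partial fractions in the variable $u=2k+1$, with poles at $u=-(2j)$ and $u=1,3,\dots$ shifted appropriately. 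Odd factors of $(1+2k)_N$ cancel against half of the $(k+3/2)_N$ factors, so what survives is a sum of terms $c_\ell/(2k+2m+1)$ together with one double pole at $2k+2N+1$.

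Differentiating in $k$ and summing over $k\ge0$ turns each simple-pole term into a tail sum $\sum_{k}(2k+2m+1)^{-2}$. Since $\sum_\ell c_\ell=0$ (the function decays like $k^{-2}$), these tails recombine into finite blocks $\sum_{j=N-\ell}^{N-1}(2j+1)^{-2}$. The double pole produces cubic tails. These should combine with $\frac{4}{\pi^2}\sum_{k\ge0}(2k+1)^{-3}$ to give the term $2\sum_{j<N}(2j+1)^{-3}$ after multiplying by $\frac{\pi^2}{2}$; the coefficient $2$ suggests the double-pole residue is exactly the right size.

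The main obstacle is computing the residues $c_\ell$ explicitly and matching them with $\frac{(-1)^{\ell+1}\binom N\ell\binom{2N-2\ell}{N}}{2\ell\binom{2N}{N}}$. The truncation at $\lfloor N/2\rfloor$ presumably reflects that only poles with $2\ell\le N$ survive the cancellation. I would first verify the formula for $N=1,2,3$ to pin down the indexing and normalization. Then I would compute the residue at $k=-(2N-2\ell+1)/2$ directly as a ratio of products, simplified by Legendre duplication into central binomials. If this becomes unwieldy, the fallback is induction on $N$: check that the difference of the right sides at $N+1$ and $N$ equals the $n=N$ summand, via a binomial identity certifiable by Zeilberger's algorithm.
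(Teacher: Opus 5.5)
Your reduction matches the paper's. Both arrive at
\[
\sum_{n=0}^{N-1}\partial_kG\left(n+\tfrac12,0\right)=\sum_{k=0}^{\infty}\left[\partial_kF\left(\tfrac12,k\right)-\partial_kF\left(N+\tfrac12,k\right)\right]
\]
and then decompose $R_N(k):=-\pi^2F(N+\frac12,k)$ into partial fractions. This function has a double pole at $2k+2N+1=0$ and simple poles at $2k+2N-2\ell+1=0$ for $1\le\ell\le\lfloor N/2\rfloor$. Your detour through $k\mapsto k+y$ is valid but unnecessary, because the $n$-sum is finite. The paper simply differentiates the telescoped identity in $k$ and sums over $k\le M$. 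The boundary term $\sum_{n<N}\partial_kG(n+\frac12,M+1)$ then tends to $0$, since each $G(n+\frac12,\cdot)$ is rational with a finite limit at infinity. If you keep the detour, note that the majorant from Theorem~\ref{thm:zeta3} covers only $F(\frac12,\cdot)$. For $F(N+\frac12,\cdot)$ you need $\sup_{[k,k+1]}|R_N'|=O(k^{-3})$, which holds because $R_N$ has no poles on $[0,\infty)$.

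The gap is that you stop exactly where the content of the lemma lies: the partial-fraction coefficients are never computed. Inferring from the target formula that the double-pole coefficient is ``exactly the right size'' is circular. It is also essential: unless the coefficient of $(2k+2N+1)^{-2}$ in $R_N$ is exactly $1$, the cubic terms do not telescope against $4\sum_k(2k+1)^{-3}$, and a multiple of $\zeta(3)$ survives. The residues are likewise left as ``the main obstacle,'' with small-$N$ checks and an unexecuted Zeilberger induction as fallbacks.

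Both evaluations are immediate from the unreduced Pochhammer form. For the double pole, use $(k+\frac32)_N=(k+\frac32)_{N-1}(k+N+\frac12)$. Then the value of $(2k+2N+1)^2R_N(k)$ at $k=-N-\frac12$ is
\[
\frac{2}{\binom{2N}{N}}\cdot\frac{(-2N)_N}{(-2N)\,(1-N)_{N-1}}=\frac{2}{\binom{2N}{N}}\cdot\frac{(2N)!}{2N\cdot N!\,(N-1)!}=1 .
\]
For the simple pole at $k_\ell=-N+\ell-\frac12$, write $(k+\frac32)_N=(k+\frac32)_{N-\ell-1}\,(k+N-\ell+\frac12)\,(k+N-\ell+\frac32)_\ell$. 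At this point the numerator is $(1+2k)_N=(2\ell-2N)_N=(-1)^N(2N-2\ell)!/(N-2\ell)!$. It is nonzero exactly when $2\ell\le N$, which is the source of the cutoff $\lfloor N/2\rfloor$. Using $(\ell+1-N)_{N-\ell-1}=(-1)^{N-\ell-1}(N-\ell-1)!$, the value of $(2k+2N-2\ell+1)R_N(k)$ at $k=k_\ell$ is
\[
\frac{2\,(2\ell-2N)_N}{\binom{2N}{N}\,(2\ell-2N)\,(2\ell)\,(\ell+1-N)_{N-\ell-1}\,\ell!}=\frac{(-1)^{\ell}\binom N\ell\binom{2N-2\ell}{N}}{2\ell\binom{2N}{N}}.
\]
This is minus the coefficient multiplying $\sum_{j=N-\ell}^{N-1}(2j+1)^{-2}$ in the lemma, and no duplication formula is needed. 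Insert these two values and pair each simple pole with $-(2k+2N+1)^{-1}$, as you describe. Your telescoping in $k$ then yields the stated identity after multiplying by $\pi^2/2$.
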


\begin{proof}
Recall the WZ pair $(F,G)$ constructed in Section 2:
\begin{align*}
 &U(n,k)=4^{-n}\frac{(1/2)_n(1/2+2k)_n}{(1)_n(1+k)_n}
                  \frac{(1/4)_k(3/4)_k}{(1)_k^2}\\
 F(n,k)&=-\frac{n}{2(n+k)}U(n,k),\qquad
 G(n,k)=\frac{k(6n+4k+1)}{2(n+k)(2n+1)}U(n,k).
\end{align*}
For every integer $N\geq0$, followed by Corollary \ref{coro:U(n+1/2,k)}, its half-integer shift gives
\begin{equation}\label{eq:F(N+1/2,k)}
   \begin{aligned}
\pi^2F\left(N+\frac12,k\right)&=-\frac{2N+1}{(2N+1+2k)(2k+1)}4^{-N}\frac{(1)_N(1+2k)_N}{(\frac{3}{2})_N(k+\frac{3}{2})_N}\\
&=-\frac1{\binom{2N}{N}}
\frac{(1+2k)_N}
{(2k+1)(2k+2N+1)(k+\frac32)_N}.
\end{aligned} 
\end{equation}
The second equality holds since 
\[
(2N+1)4^{-N}\frac{(1)_N}{(\frac32)_N}
=\frac1{\binom{2N}{N}}.
\]

Applying the WZ relation with $n$ replaced by $n+\frac12$
and summing over $0\leq n\leq N-1$, we obtain
\begin{align*}
F\left(N+\frac12,k\right)-F\left(\frac12,k\right)
=
\sum_{n=0}^{N-1}
\left[
G\left(n+\frac12,k+1\right)
-G\left(n+\frac12,k\right)
\right].
\end{align*}
Since the summation is finite, we may differentiate it with respect to the second variable. Thus
\[
\frac{\partial}{\partial 2}
F\left(N+\frac12,k\right)
-
\frac{\partial}{\partial 2}
F\left(\frac12,k\right)
=
\sum_{n=0}^{N-1}
\left[
\frac{\partial}{\partial 2}
G\left(n+\frac12,k+1\right)
-
\frac{\partial}{\partial 2}
G\left(n+\frac12,k\right)
\right].
\]
Here $\frac{\partial}{\partial 2}
F\left(- ,k\right)$ denotes $\frac{\partial}{\partial k}
F\left(- ,k\right)$.
Summing this identity over $k=0,\ldots,M$, we have
\begin{align*}
\sum_{n=0}^{N-1}
\frac{\partial}{\partial 2}
G\left(n+\frac12,0\right)=
\sum_{k=0}^{M}
\left[
\frac{\partial}{\partial 2}
F\left(\frac12,k\right)-
\frac{\partial}{\partial 2}
F\left(N+\frac12,k\right)
\right]+
\sum_{n=0}^{N-1}
\frac{\partial}{\partial 2}
G\left(n+\frac12,M+1\right).
\end{align*}
For each fixed $n$, the last term tends to zero as
$M\to\infty$. Therefore
\begin{equation}\label{eq:finite-differentiated-WZ-limit}
\sum_{n=0}^{N-1}
\frac{\partial}{\partial 2}
G\left(n+\frac12,0\right)
=
\sum_{k=0}^{\infty}
\left[
\frac{\partial}{\partial 2}
F\left(\frac12,k\right)
-
\frac{\partial}{\partial 2}
F\left(N+\frac12,k\right)
\right].
\end{equation}
Formulae \eqref{eq:derivative G(n+1/2,0)} and \eqref{eq:derivative F(1/2,k)} in Section 2 give
\[
\frac{\partial}{\partial 2}G\left(n+\frac12,0\right)=
\frac{2}{\pi^2}
\frac{(3n+2)4^n}
{(2n+1)^3(n+1)\binom{2n}{n}^2}\quad\mathrm{and}\quad\
\frac{\partial}{\partial 2}F\left(\frac12,k\right)=
\frac{4}{\pi^2(2k+1)^3}.
\]
On the other hand, by \eqref{eq:F(N+1/2,k)},
\begin{equation}
\frac{\partial}{\partial 2}
F\left(N+\frac12,k\right)
=
-\frac1{\pi^2\binom{2N}{N}}
\frac{d}{dk}
\left(
\frac{(1+2k)_N}
{(2k+1)(2k+2N+1)(k+\frac32)_N}
\right).
\nonumber
\end{equation}
Substituting these three evaluations into \eqref{eq:finite-differentiated-WZ-limit} and
multiplying both sides by $\pi^2$, we obtain
\begin{align}
2\sum_{n=0}^{N-1}
\frac{(3n+2)4^n}
{(2n+1)^3(n+1)\binom{2n}{n}^2}=
\sum_{k=0}^{\infty}
\Bigg[
\frac4{(2k+1)^3}+
\frac1{\binom{2N}{N}}
\frac{d}{dk}
\left(
\frac{(1+2k)_N}
{(2k+1)(2k+2N+1)(k+\frac32)_N}
\right)
\Bigg].
\label{eq:finite-WZ-derivative}
\end{align}

To evaluate the remaining derivative term, put
\[
R_N(k)=
\frac1{\binom{2N}{N}}
\frac{(1+2k)_N}
{(2k+1)(2k+2N+1)(k+\frac32)_N}.
\]
We first simplify the rational function $R_N(k)$. Since
\[
(1+2k)_N=\prod_{r=0}^{N-1}(2k+r+1)\quad\mathrm{and}\quad\left(k+\frac32\right)_N
=
2^{-N}\prod_{r=0}^{N-1}(2k+2r+3),
\]
cancelling the common factors gives
\begin{equation}
R_N(k)
=
\frac{2^N}{\binom{2N}{N}}
\frac{
\displaystyle\prod_{j=1}^{\lfloor N/2\rfloor}(2k+2j)
}{
(2k+2N+1)^2
\displaystyle\prod_{\ell=1}^{\lfloor N/2\rfloor}
(2k+2N-2\ell+1)
}.\nonumber
\end{equation}
Thus $R_N(k)$ has a double pole at
\[
2k+2N+1=0
\]
and simple poles at
\[
2k+2N-2\ell+1=0,
\qquad
1\leq\ell\leq\lfloor N/2\rfloor.
\]
Moreover, $R_N(k)=O(k^{-2})$ as $k\to\infty$.
Hence its partial fraction decomposition is of the form:
\[
R_N(k)
=
\frac{1}{(2k+2N+1)^2}-
\sum_{\ell=1}^{\lfloor N/2\rfloor}
A_\ell
\left(
\frac{1}{2k+2N-2\ell+1}
-\frac{1}{2k+2N+1}
\right).
\]
By taking residues at the simple poles
$2k+2N-2\ell+1=0$, we obtain
\[
A_\ell
=
\frac{(-1)^{\ell+1}
\binom N\ell\binom{2N-2\ell}{N}}
{2\ell\binom{2N}{N}}.
\]
Therefore
\begin{equation}
R_N(k)=\frac{1}{(2k+2N+1)^2}-
\sum_{\ell=1}^{\lfloor N/2\rfloor}
\frac{(-1)^{\ell+1}
\binom N\ell\binom{2N-2\ell}{N}}
{2\ell\binom{2N}{N}}
\left(
\frac{1}{2k+2N-2\ell+1}
-\frac{1}{2k+2N+1}
\right).
\nonumber
\end{equation}

Differentiating with respect to $k$, we obtain
\begin{align*}
R_N'(k)=
-\frac4{(2k+2N+1)^3}+
2\sum_{\ell=1}^{\lfloor N/2\rfloor}
\frac{(-1)^{\ell+1}
\binom N\ell\binom{2N-2\ell}{N}}
{2\ell\binom{2N}{N}}
\left(
\frac1{(2k+2N-2\ell+1)^2}
-\frac1{(2k+2N+1)^2}
\right).
\end{align*}
Then the right hand side of \eqref{eq:finite-WZ-derivative} is
\begin{align*}
4\sum_{k=0}^{\infty}
\left(
\frac1{(2k+1)^3}
-\frac1{(2k+2N+1)^3}
\right)+
2\sum_{\ell=1}^{\lfloor N/2\rfloor}
\frac{(-1)^{\ell+1}
\binom N\ell\binom{2N-2\ell}{N}}
{2\ell\binom{2N}{N}}
\sum_{k=0}^{\infty}
\left(
\frac1{(2k+2N-2\ell+1)^2}
-\frac1{(2k+2N+1)^2}
\right).
\end{align*}
For the first summation, the change of index $j=k+N$ gives
\[
\sum_{k=0}^{M}
\left(
\frac1{(2k+1)^3}
-\frac1{(2k+2N+1)^3}
\right)
=
\sum_{j=0}^{N-1}\frac1{(2j+1)^3}
-
\sum_{j=M+1}^{M+N}\frac1{(2j+1)^3}.
\]
Taking $M\to\infty$ yields
\[
\sum_{k=0}^{\infty}
\left(
\frac1{(2k+1)^3}
-\frac1{(2k+2N+1)^3}
\right)
=
\sum_{j=0}^{N-1}\frac1{(2j+1)^3},
\]
Similarly, for another summation, using $j=k+N-\ell$ and taking $M\to\infty$ yields
\[
\sum_{k=0}^{\infty}
\left(
\frac1{(2k+2N-2\ell+1)^2}
-\frac1{(2k+2N+1)^2}
\right)=
\sum_{j=N-\ell}^{N-1}\frac1{(2j+1)^2}.
\]

Eventually, \eqref{eq:finite-WZ-derivative} becomes
\[
2\sum_{n=0}^{N-1}
\frac{(3n+2)4^n}
{(2n+1)^3(n+1)\binom{2n}{n}^2}=
4\sum_{j=0}^{N-1}\frac{1}{(2j+1)^3}+2\sum_{\ell=1}^{\lfloor N/2\rfloor}
\frac{(-1)^{\ell+1}
\binom N\ell\binom{2N-2\ell}{N}}
{2\ell\binom{2N}{N}}\sum_{j=N-\ell}^{N-1}\frac{1}{(2j+1)^2}.
\]
This finishes the proof.
\end{proof}

\subsubsection{A differentiated Pfaff--Saalsch\"utz identity}
\begin{lem}\label{lem:pfaff}
For every integer $N\geq 0$,
\[
\sum_{\ell=1}^{N}
\frac{(-1)^\ell}{\ell}
\binom{N}{\ell}\binom{N+\ell}{\ell}
\sum_{j=1}^{\ell}\frac1{j^2}
=
-2\sum_{j=1}^{N}\frac1{j^3}.
\]
\end{lem}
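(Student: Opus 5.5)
The title of the subsection points to the method: introduce a parameter into the Pfaff--Saalsch\"utz identity and differentiate. We start from the terminating ${}_3F_2$ evaluation
\[
\sum_{\ell=0}^{N}\frac{(-N)_\ell(N+1)_\ell(x)_\ell}{(1)_\ell(1+x)_\ell\,\ell!}\,\cdot\ell!\Big/\ell!
\]
In cleaner form, we use the partial-fraction identity
\[
\sum_{\ell=0}^{N}(-1)^\ell\binom{N}{\ell}\binom{N+\ell}{\ell}\frac{x}{x+\ell}
=\frac{(1-x)_N}{(1+x)_N}
=\prod_{j=1}^{N}\frac{j-x}{j+x}.
\]
This is the Pfaff--Saalsch\"utz (equivalently Chu--Vandermonde) evaluation of ${}_3F_2(-N,N+1,x;1,1+x;1)$. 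It can also be checked directly, since both sides are rational in $x$ with the same simple poles at $x=-\ell$. The residues agree because $(-1)^\ell\binom N\ell\binom{N+\ell}\ell(-\ell)$ equals the residue of $\prod_j (j-x)/(j+x)$ at $x=-\ell$. Both sides also tend to $(-1)^N$ as $x\to\infty$, by the Vandermonde sum $\sum_\ell(-1)^\ell\binom N\ell\binom{N+\ell}\ell=(-1)^N$.

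Next we rewrite $x/(x+\ell)=1-\ell/(x+\ell)$ and expand both sides as power series in $x$ around $0$. On the left, the $\ell=0$ term is $1$. For $\ell\ge1$ we have
\[
\frac{x}{x+\ell}=\frac{x}{\ell}-\frac{x^2}{\ell^2}+\frac{x^3}{\ell^3}-\cdots.
\]
This does not directly produce the inner sum $\sum_{j\le\ell}1/j^2$, so instead we deform the identity so that harmonic numbers of $\ell$ appear on the left. The natural choice is to replace the Pochhammer data by $(-N)_\ell(N+1)_\ell/((1+x)_\ell(1-x)_\ell)$. Then
\[
\frac{\ell!^2}{(1+x)_\ell(1-x)_\ell}=\prod_{j=1}^{\ell}\Big(1-\frac{x^2}{j^2}\Big)^{-1}
=1+x^2\sum_{j=1}^{\ell}\frac1{j^2}+O(x^4),
\]
which produces exactly $\sum_{j\le\ell}1/j^2$ at order $x^2$.

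Concretely, we take the balanced Saalsch\"utz sum
\[
{}_3F_2\!\left(\begin{matrix}-N,\,N+1,\,1\\ 1+x,\,1-x\end{matrix};1\right)
\]
in a form with an extra factor $1/\ell$. The cleanest route is the known identity
\[
\sum_{\ell=1}^{N}\frac{(-1)^{\ell}}{\ell}\binom N\ell\binom{N+\ell}{\ell}\frac{\ell!^2}{(1+x)_\ell(1-x)_\ell}
=\sum_{\ell=1}^N\frac{(-1)^{\ell}}{\ell}\binom N\ell\binom{N+\ell}\ell\prod_{j\le \ell}\frac{j^2}{j^2-x^2}.
\]
We then seek a closed form for this as a function of $x$ of the type $-2\sum_{j=1}^N \frac{1}{j}\cdot\frac{j^2}{j^2-x^2}$, or a similar product. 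Before committing, we test it at $N=1$. The left side is $-2/(1-x^2)$, matching $-2\cdot 1/(1-x^2)$. We test again at $N=2$, where the left side is
\[
-6/(1-x^2)+6\cdot\frac{4}{(1-x^2)(4-x^2)}\cdot\frac12\cdot\ldots
\]
We verify the general claim by partial fractions in $t=x^2$. Both sides are rational in $t$ with simple poles at $t=m^2$ and vanish at $t=\infty$. The residue at $t=m^2$ on the left is
\[
\sum_{\ell\ge m}\frac{(-1)^\ell}{\ell}\binom N\ell\binom{N+\ell}\ell\prod_{j\le\ell,\,j\ne m}\frac{j^2}{j^2-m^2}\cdot(-m^2),
\]
and it must equal $2m$. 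This residue identity is itself a terminating (Saalsch\"utz/Vandermonde-type) sum in $\ell$, since $\prod_{j\ne m}j^2/(j^2-m^2)$ is a ratio of Pochhammers in $\ell$. Once the closed form holds, comparing the $x^0$ terms gives the classical identity $\sum_{\ell}\frac{(-1)^\ell}{\ell}\binom N\ell\binom{N+\ell}\ell=-2H_N$. Comparing the $x^2$ terms gives the lemma exactly, because
\[
-2\sum_{j}\frac1j\Big(1+\frac{x^2}{j^2}\Big)
\]
has $x^2$-coefficient $-2\sum_j 1/j^3$.

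The main obstacle is establishing this deformed identity, namely the residue computation at $t=m^2$, which should reduce to a Chu--Vandermonde evaluation after reindexing $\ell=m+i$. As a fallback, we would prove the lemma by induction on $N$. We would find a telescoping certificate for the summand in $N$ using Zeilberger's algorithm, show that the difference between the cases $N$ and $N-1$ equals $-2/N^3$, and use the $x^0$ identity for the harmonic part.
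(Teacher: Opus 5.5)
The deformed identity you aim for,
\[
\sum_{\ell=1}^{N}\frac{(-1)^\ell}{\ell}\binom{N}{\ell}\binom{N+\ell}{\ell}\prod_{j=1}^{\ell}\Bigl(1-\frac{x^2}{j^2}\Bigr)^{-1}=-2\sum_{j=1}^{N}\frac{j}{j^2-x^2},
\]
is exactly the identity the paper proves. Your final step, reading off the coefficient of $x^2$, is also the paper's. But you never establish this identity. You guess it and check it only at $N=1$; the $N=2$ check breaks off. Its proof is then labelled ``the main obstacle''. Everything else is routine, so this is a genuine gap.

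The tools you name do not close it as stated. ${}_3F_2(-N,N+1,1;1+x,1-x;1)$ is not balanced: the lower parameters sum to $2$, not $3$. Zeilberger's algorithm does not apply directly to a summand containing $\sum_{j\le\ell}1/j^2$.

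The missing idea is to produce the factor $1/\ell$ from an auxiliary parameter. Your opening identity (with your $x$ renamed $u$) is the case $x=0$ of the two-parameter Pfaff--Saalsch\"utz evaluation
\[
{}_3F_2\left(\begin{matrix}-N,\ N+1,\ u\\ 1+x,\ 1+u-x\end{matrix};1\right)=\frac{(1-x)_N(1+x-u)_N}{(1+x)_N(1+u-x)_N}.
\]
Keep $x$ and differentiate in $u$ at $u=0$. Since $(u)_\ell$ vanishes at $u=0$ with derivative $(\ell-1)!$, the left side becomes your sum. The right side equals $1$ at $u=0$, and its logarithmic derivative is $-\sum_{j=1}^{N}\bigl(\frac{1}{j+x}+\frac{1}{j-x}\bigr)$. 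This is the paper's proof.

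Your partial-fraction route in $t=x^2$ also works once the residue computation is actually carried out. Because of the factor $1/\ell$, though, it needs Pfaff--Saalsch\"utz rather than Chu--Vandermonde. One finds
\[
\prod_{j\le\ell,\,j\ne m}\frac{j^2}{j^2-m^2}=2(-1)^{m-1}\frac{(\ell!)^2}{(\ell-m)!\,(\ell+m)!}.
\]
Combined with $\binom{N}{\ell}\binom{N+\ell}{\ell}(\ell!)^2=\frac{(N+\ell)!}{(N-\ell)!}$, your residue condition at $t=m^2$ becomes
\[
\sum_{\ell=m}^{N}\frac{(-1)^{\ell-m}}{\ell}\,\frac{(N+\ell)!}{(N-\ell)!\,(\ell-m)!\,(\ell+m)!}=\frac1m .
\]
Put $\ell=m+i$. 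The left side equals
\[
\frac{(N+m)!}{m\,(2m)!\,(N-m)!}\;{}_3F_2(m-N,\,N+m+1,\,m;\,2m+1,\,m+1;\,1).
\]
This series is balanced, and Saalsch\"utz evaluates it as $\frac{(N-m)!\,(2m)!}{(N+m)!}$, giving exactly $1/m$.

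With this supplied, your argument is a correct alternative. It verifies the deformed identity pole by pole, using one Saalsch\"utz sum for each $m$. The paper instead obtains the whole identity at once from a single Saalsch\"utz formula carrying the extra parameter $u$.
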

\begin{proof}
Recall that the generalized hypergeometric function is defined by
\[
{}_pF_q\left(
\begin{matrix}
a_1,\ldots,a_p\\
b_1,\ldots,b_q
\end{matrix};z
\right)
=
\sum_{m=0}^{\infty}
\frac{(a_1)_m\cdots(a_p)_m}
     {(b_1)_m\cdots(b_q)_m}
\frac{z^m}{m!}.
\]
In the present case, one of the upper parameters is $-N$,
so the series terminates at $m=N$.

By the terminating Pfaff--Saalsch\"utz formula
\cite[Formula 16.4.3]{DLMF},
\begin{equation}\label{eq:pfaff-saalschutz}
    \begin{aligned}
      {}_3F_2\left(
\begin{matrix}
-N,\ N+1,\ u\\
1+z,\ 1+u-z
\end{matrix};1
\right):&=\sum_{\ell=0}^{N} \frac{(-N)_{\ell}(N+1)_{\ell}(u)_{\ell}}{(1+z)_{\ell}(1+u-z)_{\ell}\,\ell!}\\
&=\frac{(1-z)_N(1+z-u)_N}
     {(1+z)_N(1+u-z)_N}.  
    \end{aligned}
\end{equation}

Since the series on the left-hand side is finite, we may
differentiate term by term with respect to $u$:
\[
\left.
\frac{d}{du}
\frac{(u)_\ell}{(1+u-z)_\ell}
\right|_{u=0}
=
\frac{\frac{d}{du}|_{u=0}(u)_\ell}{(1+u-z)_\ell}.
\]
For $\ell\geq1$,
\[
\left.\frac{d}{du}(u)_\ell\right|_{u=0}
=(\ell-1)!,
\]
whereas $(u)_\ell|_{u=0}=0$. Thus the derivative of the left-hand side of
\eqref{eq:pfaff-saalschutz} at $u=0$ is
\[
\sum_{\ell=1}^{N}
\frac{(-N)_\ell(N+1)_\ell}
     {(1+z)_\ell(1-z)_\ell}
\frac{(\ell-1)!}{\ell!}=\sum_{\ell=1}^{N}\frac{(-N)_\ell(N+1)_\ell}
     {(1+z)_\ell(1-z)_\ell}\frac{1}{\ell}.
\]
Using
\[
(-N)_\ell(N+1)_\ell
=
(-1)^\ell(\ell!)^2
\binom{N}{\ell}\binom{N+\ell}{\ell}
\]
and
\[
(1+z)_\ell(1-z)_\ell
=
(\ell!)^2
\prod_{j=1}^{\ell}
\left(1-\frac{z^2}{j^2}\right),
\]
this becomes
\begin{equation}\label{eq:pfaff-left-derivative}
\sum_{\ell=1}^{N}
\frac{(-1)^\ell}{\ell}
\binom{N}{\ell}\binom{N+\ell}{\ell}
\prod_{j=1}^{\ell}
\left(1-\frac{z^2}{j^2}\right)^{-1}.
\end{equation}

We now differentiate the right-hand side of
\eqref{eq:pfaff-saalschutz}. Put
\[
R(u)=
\frac{(1-z)_N(1+z-u)_N}
     {(1+z)_N(1+u-z)_N}.
\]
Since $R(0)=1$, the logarithmic differentiation gives
\[
\left.\frac{d}{du}R(u)\right|_{u=0}
=
-\sum_{j=1}^{N}
\left(
\frac1{j+z}+\frac1{j-z}
\right)
=
-2\sum_{j=1}^{N}\frac{j}{j^2-z^2}.
\]
Consequently,
\begin{equation}\label{eq:pfaff-differentiated}
\sum_{\ell=1}^{N}
\frac{(-1)^\ell}{\ell}
\binom{N}{\ell}\binom{N+\ell}{\ell}
\prod_{j=1}^{\ell}
\left(1-\frac{z^2}{j^2}\right)^{-1}
=
-2\sum_{j=1}^{N}\frac{j}{j^2-z^2}.
\end{equation}

Finally, around $z=0$,
\[
\prod_{j=1}^{\ell}
\left(1-\frac{z^2}{j^2}\right)^{-1}
=
1+
z^2\sum_{j=1}^{\ell}\frac1{j^2}
+O(z^4),
\]
while
\[
\frac{j}{j^2-z^2}
=
\frac1j+\frac{z^2}{j^3}+O(z^4).
\]
Comparing the coefficients of $z^2$ in
\eqref{eq:pfaff-differentiated}, we obtain
\[
\sum_{\ell=1}^{N}
\frac{(-1)^\ell}{\ell}
\binom{N}{\ell}\binom{N+\ell}{\ell}
\sum_{j=1}^{\ell}\frac1{j^2}
=
-2\sum_{j=1}^{N}\frac1{j^3},
\]
as claimed.
\end{proof}

\subsubsection{Proof of the upper truncation congruence}
\
\newline
\indent We now apply the preceding Lemma to obtain the desired congruence for the upper truncation.

\textbf{Proof of Formula \eqref{eq:3log2 upper} in Theorem \ref{thm:3log2 congruence}:} We first record the following elementary reflection congruence.
For $1\leq k\leq (p-1)/2$,
\begin{equation}\label{eq:binom-reflection}
\frac1p\binom{2p-2k}{p-k}
\equiv
-\frac{2}{k\binom{2k}{k}}
\pmod p.
\end{equation}
Indeed, there is exactly one factor $p$ in $(2p-2k)!$
and none in $(p-k)!$. Hence
\[
\frac1p\binom{2p-2k}{p-k}
\equiv
-\frac{(p-2k)!}{((p-k)!)^2}
\pmod p.
\]
By Wilson's theorem,
\[
(p-r)!(r-1)!\equiv (-1)^r\pmod p,
\]
and therefore
\[
-\frac{(p-2k)!}{((p-k)!)^2}
\equiv
-\frac{((k-1)!)^2}{(2k-1)!}
=
-\frac{2}{k\binom{2k}{k}}
\pmod p.
\]\\

Applying \eqref{eq:binom-reflection} and making the
substitution $n=p-k$, we obtain
\begin{equation}
 \begin{aligned}
\frac1{p^2}
\sum_{n=\frac{p+1}{2}}^{p-1}
\frac{6n+1}{2n(2n+1)}
\binom{2n}{n}^2\frac1{2^{6n}}
&=\sum_{k=1}^{\frac{p-1}{2}} \frac{6(p-k)+1}{2(p-k)(2p-2k+1)} \left( \frac{1}{p} \binom{2p-2k}{p-k} \right)^2 \frac{1}{2^{6(p-k)}}\\
&\equiv
\sum_{k=1}^{\frac{p-1}{2}}
\frac{2(1-6k)2^{6(k-1)}}
{k^3(2k-1)\binom{2k}{k}^2}
\pmod p.
\label{eq:upper-reflection}
\end{aligned}    
\end{equation}
Here we have also used $64^{p-1}\equiv1\pmod p$. To use Lemma \ref{lem:finite-derived-wz}, we next reindex the sum by setting
\[
k=\frac{p-1}{2}-m,
\qquad
0\leq m\leq \frac{p-3}{2}.
\]
Then
\[
2(1-6k)
\equiv
4(3m+2)
\pmod p\quad\mathrm{and}\quad k^3(2k-1)
\equiv
\frac{(2m+1)^3(m+1)}{4}
\pmod p.
\]
Moreover, Fermat's little theorem gives
\[
2^{6(k-1)}=
2^{3(p-1)-6m-6}
\equiv
2^{-6m-6}
\pmod p.
\]
It remains to transform the central binomial coefficient.
For $0\leq r\leq (p-1)/2$, we have
\[
4^{-r}\binom{2r}{r}
\equiv
(-1)^r\binom{(p-1)/2}{r}
\pmod p.
\]
Applying this with $r=(p-1)/2-m$ and $r=m$, and using
the symmetry of the binomial coefficients, we obtain
\[
\binom{p-1-2m}{(p-1)/2-m}
\equiv
(-1)^{(p-1)/2}
16^{-m}\binom{2m}{m}
\pmod p,
\]
where $4^{(p-1)/2}\equiv1\pmod p$ has been used.
Consequently,
\[
\binom{p-1-2m}{(p-1)/2-m}^{-2}
\equiv
\frac{256^m}{\binom{2m}{m}^2}
\pmod p.
\]
Combining these congruences, we have
\begin{equation}\label{eq:upper-inverse-binomial}
\frac1{p^2}
\sum_{n=\frac{p+1}{2}}^{p-1}
\frac{6n+1}{2n(2n+1)}
\binom{2n}{n}^2\frac1{2^{6n}}
\equiv
\frac14
\sum_{m=0}^{\frac{p-3}{2}}
\frac{(3m+2)4^m}
{(2m+1)^3(m+1)\binom{2m}{m}^2}
\pmod p.
\end{equation}\\

We now apply Lemma \ref{lem:finite-derived-wz} with $N=\frac{p-1}{2}$. Since $\left\lfloor \frac{N}{2}\right\rfloor=\left\lfloor \frac{p}{4}\right\rfloor$,
Lemma \ref{lem:finite-derived-wz} gives
\begin{align}
\sum_{m=0}^{(p-3)/2}
\frac{(3m+2)4^m}
{(2m+1)^3(m+1)\binom{2m}{m}^2}=
2\sum_{j=0}^{(p-3)/2}\frac1{(2j+1)^3}
+
\sum_{\ell=1}^{\lfloor p/4\rfloor}
A_\ell
\sum_{j=(p-1)/2-\ell}^{(p-3)/2}
\frac1{(2j+1)^2},
\label{eq:lemma44-specialized}
\end{align}
where
\[
A_\ell=
\frac{(-1)^{\ell+1}
\binom{(p-1)/2}{\ell}
\binom{p-1-2\ell}{(p-1)/2}}
{2\ell\binom{p-1}{(p-1)/2}}.
\]

Using the standard relations between generalized binomial
coefficients and Pochhammer symbols (see
 \cite[Formula (1.2.6); Formula (5.2.8)]{DLMF}),
we have, for $1\leq\ell\leq\lfloor p/4\rfloor$,
\[
\binom{(p-1)/2}{\ell}
\equiv
(-1)^\ell4^{-\ell}\binom{2\ell}{\ell}
\pmod p,
\]
while
\[
\frac{\binom{p-1-2\ell}{(p-1)/2}}
{\binom{p-1}{(p-1)/2}}
=
\frac{\binom{(p-1)/2}{2\ell}}
{\binom{p-1}{2\ell}}
\equiv
16^{-\ell}\binom{4\ell}{2\ell}
\pmod p.
\]
Consequently,
\begin{align}
A_\ell\equiv
-\frac{1}{2\ell}
\frac{\binom{2\ell}{\ell}\binom{4\ell}{2\ell}}
{64^\ell}
=
-\frac{1}{2\ell}
\frac{(\frac14)_\ell(\frac34)_\ell}{(\ell!)^2} \equiv(-1)^{\ell+1}\frac{1}{2\ell}
\binom{\lfloor p/4\rfloor}{\ell}\binom{\lfloor p/4\rfloor+\ell}{\ell}
\pmod p.
\label{eq:Aell-congruence}
\end{align}
The last equivalence holds since $\{1/4,3/4\}\equiv\{-\lfloor p/4\rfloor, \lfloor p/4\rfloor+1\}\pmod p$.

Moreover, writing
\[
j=\frac{p-1}{2}-r,
\qquad 1\leq r\leq\ell,
\]
we have $2j+1=p-2r\equiv-2r\pmod p$. Hence
\begin{equation}\label{eq:terminal-odd-square}
\sum_{j=(p-1)/2-\ell}^{(p-3)/2}
\frac1{(2j+1)^2}
\equiv
\frac14\sum_{r=1}^{\ell}\frac1{r^2}
\pmod p.
\end{equation}

Substituting
\eqref{eq:Aell-congruence} and
\eqref{eq:terminal-odd-square}
into \eqref{eq:lemma44-specialized}, we obtain
\begin{align*}
\sum_{m=0}^{(p-3)/2}
\frac{(3m+2)4^m}
{(2m+1)^3(m+1)\binom{2m}{m}^2}\equiv
2\sum_{j=0}^{(p-3)/2}\frac1{(2j+1)^3}
-\frac18
\sum_{\ell=1}^{\lfloor p/4\rfloor}
\frac{(-1)^\ell}{\ell}
\binom{\lfloor p/4\rfloor}{\ell}\binom{\lfloor p/4\rfloor+\ell}{\ell}
\sum_{r=1}^{\ell}\frac1{r^2}
\pmod p.
\end{align*}
Applying Lemma \ref{lem:pfaff}, we conclude that
\begin{equation}\label{eq:upper-harmonic-reduction}
\sum_{m=0}^{(p-3)/2}
\frac{(3m+2)4^m}
{(2m+1)^3(m+1)\binom{2m}{m}^2}
\equiv
2\sum_{j=0}^{(p-3)/2}\frac1{(2j+1)^3}
+
\frac14\sum_{j=1}^{\lfloor p/4\rfloor}\frac1{j^3}
\pmod p.
\end{equation}\\

It remains to evaluate the two sums in \eqref{eq:upper-harmonic-reduction}.
Recall that the Bernoulli polynomials $B_n(x)$ are defined by
\[
\frac{te^{xt}}{e^t-1}
=
\sum_{n=0}^{\infty} B_n(x)\frac{t^n}{n!},\quad B_n=B_n(0);
\]
see \cite[Formula 24.2.3]{DLMF}.

We first note that the Bernoulli numbers which occur below are
$p$-integral. Indeed, by the von Staudt--Clausen theorem
(see \cite[Formula 24.10.1]{DLMF}), the denominator of
$B_{2r}$ is the product of the primes $q$ satisfying
$q-1\mid 2r$. Since $p-1\nmid p-3$, the denominator of
$B_{p-3}$ is prime to $p$.

By the special-value formula for Bernoulli polynomials
(see \cite[Formula 24.4.27]{DLMF}),
\[
B_n\left(\frac12\right)
=
\left(2^{1-n}-1\right)B_n.
\]
Taking $n=p-3$ and using Fermat's little theorem, we obtain
\begin{equation}\label{eq:Bernoulli-half}
B_{p-3}\left(\frac12\right)
=
\left(2^{4-p}-1\right)B_{p-3}
\equiv
7B_{p-3}
\pmod p.
\end{equation}
Moreover, by the reflection formula
(see \cite[Formula 24.4.3]{DLMF}),
\[
B_{p-3}\left(\frac14\right)
=
B_{p-3}\left(\frac34\right),
\]
since $p-3$ is even. The multiplication formula
\cite[Formula 24.4.21]{DLMF}, together with the
preceding symmetry, gives
\[
B_{p-3}\left(\frac14\right)
=
B_{p-3}\left(\frac34\right)
=
2^{3-p}B_{p-3}\left(\frac12\right).
\]
Hence, by \eqref{eq:Bernoulli-half} and Fermat's little theorem,
\begin{equation}\label{eq:Bernoulli-quarter}
B_{p-3}\left(\frac14\right)
=
B_{p-3}\left(\frac34\right)
\equiv
28B_{p-3}
\pmod p.
\end{equation}

We now use the power-sum formula
\cite[Formula 24.4.7]{DLMF},
\[
\sum_{j=1}^{M}j^r
=
\frac{B_{r+1}(M+1)-B_{r+1}}{r+1}.
\]
Since $j^{-3}\equiv j^{p-4}\pmod p$ for $1\leq j\leq p-1$, taking $r=p-4$ gives
\begin{equation}\label{eq:power-sum-cubic}
\sum_{j=1}^{M}\frac1{j^3}
\equiv
\frac{B_{p-3}(M+1)-B_{p-3}}{p-3}
\pmod p.
\end{equation}
Taking $M=(p-1)/2$ in \eqref{eq:power-sum-cubic}, and using
$(p+1)/2\equiv 1/2\pmod p$ together with
\eqref{eq:Bernoulli-half}, we obtain
\begin{equation}\label{eq:half-harmonic-cubic}
\begin{aligned}
\sum_{j=1}^{(p-1)/2}\frac1{j^3}\equiv
\frac{B_{p-3}(\frac12)-B_{p-3}}{p-3} \equiv
-2B_{p-3}
\pmod p.
\end{aligned}
\end{equation}
Similarly, let $M=\lfloor p/4\rfloor$, since $\{1/4,3/4\}\equiv\{-\lfloor p/4\rfloor, \lfloor p/4\rfloor+1\}\pmod p$, by \eqref{eq:Bernoulli-quarter} and
\eqref{eq:power-sum-cubic},
\begin{equation}\label{eq:quarter-harmonic-cubic}
\begin{aligned}
\sum_{j=1}^{\lfloor p/4\rfloor}\frac1{j^3}\equiv
\frac{28B_{p-3}-B_{p-3}}{p-3}  \equiv
-9B_{p-3}
\pmod p.
\end{aligned}
\end{equation}

It remains to evaluate the sum over the odd denominators. Observing that $2j+1$ represents the odd numbers from 1 to $p-2$, we can write the odd residues modulo $p$ in the form
$p-2r$, $1\leq r\leq(p-1)/2$. Then
\begin{align}
\sum_{j=0}^{(p-3)/2}\frac1{(2j+1)^3}\equiv
\sum_{r=1}^{(p-1)/2}\frac1{(p-2r)^3}\equiv
-\frac18
\sum_{r=1}^{(p-1)/2}\frac1{r^3}\equiv
\frac14 B_{p-3}
\pmod p,
\label{eq:odd-harmonic-cubic}
\end{align}
where the last congruence follows from
\eqref{eq:half-harmonic-cubic}. Substituting \eqref{eq:quarter-harmonic-cubic} and
\eqref{eq:odd-harmonic-cubic} into \eqref{eq:upper-harmonic-reduction}, we have
\begin{align}
\sum_{m=0}^{(p-3)/2}
\frac{(3m+2)4^m}
{(2m+1)^3(m+1)\binom{2m}{m}^2}\equiv
-\frac74 B_{p-3}
\pmod p.
\label{eq:inverse-binomial-Bernoulli}
\end{align}

Finally, combining \eqref{eq:inverse-binomial-Bernoulli}
with \eqref{eq:upper-inverse-binomial}, 
\[
\frac1{p^2}
\sum_{n=(p+1)/2}^{p-1}
\frac{6n+1}{2n(2n+1)}
\binom{2n}{n}^2\frac1{2^{6n}}
\equiv
-\frac7{16}B_{p-3}
\pmod p.
\]
i.e.,
\[
\sum_{n=(p+1)/2}^{p-1}
\frac{6n+1}{2n(2n+1)}
\binom{2n}{n}^2\frac1{2^{6n}}
\equiv
-\frac7{16}p^2B_{p-3}
\pmod{p^3}.
\]

This proves Formula \eqref{eq:3log2 upper} and completes the proof of Theorem \ref{thm:3log2 congruence}. \hfill$\square$